\begin{document}

\externaldocument[MRV1-]{"MRVI"}

\renewcommand{\AA}{\mathbb{A}}

\renewcommand\widetilde[1]{\ThisStyle{%
  \setbox0=\hbox{$\SavedStyle#1$}%
  \stackengine{-.1\LMpt}{$\SavedStyle#1$}{%
    \stretchto{\scaleto{\SavedStyle\mkern.2mu\sim}{.5467\wd0}}{.7\ht0}%
  }{O}{c}{F}{T}{S}%
}}

\newcommand{\djunion}{\mathop{\bigsqcup}}
\newcommand{\BB}{\mathbb{B}}
\newcommand{\KK}{\mathbb{K}}
\newcommand{\DD}{\mathbb{D}}
\newcommand{\QQ}{\mathbb{Q}}
\newcommand{\ZZ}{\mathbb{Z}}
\newcommand{\FF}{\mathbb{F}}
\newcommand{\LL}{\mathbb{L}}
\newcommand{\NN}{\mathbb{N}}
\newcommand{\EE}{\mathbb{E}}
\newcommand{\HH}{\mathbb{H}}
\newcommand{\RR}{\mathbb{R}}
\newcommand{\PP}{\mathbb{P}}
\newcommand{\TT}{\mathbb{T}}
\renewcommand{\SS}{\mathbb{S}}
\newcommand{\CC}{\mathbb{C}}
\newcommand{\GG}{\mathbb{G}}
\newcommand{\WW}{\mathbb{W}}

\newcommand{\Oc}{\mathcal{O}}
\newcommand{\Ac}{\mathcal{A}}
\newcommand{\Cc}{\mathcal{C}}
\newcommand{\Ec}{\mathcal{E}}
\newcommand{\Fc}{\mathcal{F}}
\newcommand{\Sc}{\mathcal{S}}
\newcommand{\Gc}{\mathcal{G}}
\newcommand{\Hc}{\mathcal{H}}
\renewcommand{\Rc}{\mathcal{R}}
\newcommand{\Pc}{\mathcal{P}}
\renewcommand{\Lc}{\mathcal{L}}
\renewcommand{\Uc}{\mathcal{U}}
\renewcommand{\Yc}{\mathcal{Y}}

\renewcommand{\Mc}{\mathcal{M}}
\newcommand{\Vc}{\mathcal{V}}

\newcommand{\coker}{\mathrm{coker}}
\newcommand{\gf}{\mathfrak{g}}
\newcommand{\pf}{\mathfrak{p}}
\newcommand{\mf}{\mathfrak{m}}
\newcommand{\qf}{\mathfrak{q}}
\newcommand{\kf}{\mathfrak{k}}
\newcommand{\hf}{\mathfrak{h}}
\newcommand{\Xf}{\mathfrak{X}}

\newcommand{\tensor}{\otimes}
\newcommand{\extp}{\mathchoice{{\textstyle\bigwedge}}%
    {{\bigwedge}}%
    {{\textstyle\wedge}}%
    {{\scriptstyle\wedge}}}
\newcommand{\comptensor}{\widehat{\otimes}}

\newcommand{\cont}{\mathrm{cont}}
\newcommand{\Newt}{\mathrm{Newt}}

\newcommand{\actsonr}{\mathrel{\reflectbox{$\righttoleftarrow$}}}
\newcommand{\actsonl}{\mathrel{\reflectbox{$\lefttorightarrow$}}}

\newcommand{\isoeq}{\cong}
\newcommand{\cech}{\vee}
\newcommand{\dsum}{\mathop{\oplus}}

\newcommand{\Tr}{\mathrm{Tr}}
\newcommand{\Gal}{\mathrm{Gal}}

\newcommand{\Mod}{\mathrm{Mod}}
\newcommand{\Rep}{\mathrm{Rep}}
\newcommand{\Hecke}{\mathrm{Hecke}}
\newcommand{\Fil}{\mathrm{Fil}}
\newcommand{\Qlog}{\mathrm{Qlog}}
\newcommand{\Image}{\mathrm{Im}}
\newcommand{\Ker}{\mathrm{ker}}
\newcommand{\Id}{\mathrm{Id}}
\newcommand{\std}{\mathrm{std}}
\newcommand{\dR}{\mathrm{dR}}
\newcommand{\Gr}{\mathrm{Gr}}
\newcommand{\Aut}{\mathrm{Aut}}
\newcommand{\Pic}{\mathrm{Pic}}
\newcommand{\Div}{\mathrm{Div}}
\newcommand{\Frob}{\mathrm{Frob}}
\newcommand{\Proj}{\mathrm{Proj}}
\newcommand{\Spa}{\mathrm{Spa}}
\newcommand{\Spf}{\mathrm{Spf}}
\newcommand{\Spec}{\mathrm{Spec}}
\newcommand{\Hom}{\mathrm{Hom}}
\newcommand{\Isom}{\mathrm{Isom}}
\newcommand{\proet}{\mathrm{pro\acute{e}t}}
\newcommand{\Lie}{\mathrm{Lie}}
\newcommand{\perf}{\mathrm{perf}}
\renewcommand{\Hecke}{\mathrm{Hecke}}

\newcommand{\IC}{\mathrm{IC}}

\newcommand{\Map}{\mathrm{Map}}
\newcommand{\Ext}{\mathrm{Ext}}

\newcommand{\Homc}{\mathrm{Hom}_\mathrm{cont}}

\newcommand{\height}{\mathrm{ht}}
\newcommand{\un}{\mathrm{un}}
\newcommand{\et}{\mathrm{\acute{e}t}}
\newcommand{\profet}{\mathrm{prof\acute{e}t}}
\newcommand{\an}{\mathrm{an}}
\newcommand{\cris}{\mathrm{cris}}
\newcommand{\cyc}{\mathrm{cyc}}
\newcommand{\ad}{\mathrm{ad}}
\newcommand{\cl}{\mathrm{cl}}
\newcommand{\coh}{\mathrm{coh}}
\newcommand{\GL}{\mathrm{GL}}
\newcommand{\union}{\bigcup}

\newcommand{\rank}{\mathrm{rank}}
\newcommand{\ord}{\mathrm{ord}}
\newcommand{\Cont}{\mathrm{Cont}}
\newcommand{\Bun}{\mathrm{Bun}}
\newcommand{\piF}{\varpi_F}
\newcommand{\Irred}{\mathrm{Irred}}

\newcommand{\Ad}{\mathrm{Ad}}

\newcommand{\tr}{\mathrm{tr}}

\newcommand{\bbL}{\mathbb{L}}

\newcommand{\calQ}{\mathcal{Q}}
\newcommand{\bbN}{\mathbb{N}}
\newcommand{\bbA}{\mathbb{A}}
\newcommand{\bbZ}{\mathbb{Z}}

\newcommand{\Conf}{\mathrm{Conf}}
\newcommand{\Sym}{\mathrm{Sym}}
\newcommand{\End}{\mathrm{End}}

\DeclarePairedDelimiter\floor{\lfloor}{\rfloor}

\theoremstyle{plain}

\newtheorem{maintheorem}{Theorem} 
\renewcommand{\themaintheorem}{\Alph{maintheorem}} 
\newtheorem{maincorollary}[maintheorem]{Corollary}
\newtheorem{mainconjecture}[maintheorem]{Conjecture}

\newtheorem*{theorem*}{Theorem}

\newtheorem{theorem}{Theorem}[subsection]
\newtheorem{corollary}[theorem]{Corollary}
\newtheorem{conjecture}[theorem]{Conjecture}
\newtheorem{proposition}[theorem]{Proposition}
\newtheorem{lemma}[theorem]{Lemma}

\numberwithin{equation}{theorem}

\theoremstyle{definition}

\newtheorem{example}[theorem]{Example}
\newtheorem{definition}[theorem]{Definition}
\newtheorem{remark}[theorem]{Remark}
\newtheorem{question}[theorem]{Question}

\newtheorem{principle*}{Principle}

\title[M.R.V. and rep. stability II: Hypersurface sections]{Motivic random variables and representation stability II: Hypersurface sections}
\author{Sean Howe}
\address{Department of Mathematics, University of Utah, Salt Lake City, UT 84112}
\email{sean.howe@utah.edu}
\thanks{This material is based upon work supported by the National Science Foundation under Award
No. DMS-1704005.}

\keywords{Representation stability, motivic stabilization, arithmetic statistics, monodromy, hypersurface sections, cohomological stability}
\maketitle

\tableofcontents

\newcommand{\motcomp}{\widehat{\Mc_\LL}} 
\newcommand{\mot}{\Mc}
\newcommand{\Var}{\mathrm{Var}}
\newcommand{\van}{\mathrm{van}}
\newcommand{\Loc}{\mathrm{Loc}}
\newcommand{\VHS}{\mathrm{VHS}}
\newcommand{\HS}{\mathrm{HS}}
\newcommand{\MHM}{\mathrm{MHM}}

\newcommand{\VanQ}{\Vc_{\van,\QQ}}
\newcommand{\VanQl}{\Vc_{\van,\QQ_l}}

\newcommand{\old}{\mathrm{old}}
\newcommand{\Pow}{\mathrm{Pow}}

\begin{abstract}
We prove geometric and cohomological stabilization results for the universal smooth degree $d$ hypersurface section of a fixed smooth projective variety as $d$ goes to infinity. We show that relative configuration spaces of the universal smooth hypersurface section stabilize in the completed Grothendieck ring of varieties, and deduce from this the stabilization of the Hodge Euler characteristic of natural families of local systems constructed from the vanishing cohomology. We prove  explicit formulas for the stable values using a probabilistic interpretation, along with the natural analogs in point counting over finite fields. We explain how these results provide new geometric examples of a weak version of representation stability for symmetric, symplectic, and orthogonal groups. This interpretation of representation stability was studied in the prequel \cite{Howe-MRV1} for configuration spaces.
\end{abstract}

\section{Introduction} 

In this work, we prove geometric and cohomological stabilization results for the universal smooth degree $d$ hypersurface section of a fixed smooth projective variety as $d$ goes to infinity, bringing together three important ideas in the field of cohomological stability: motivic stabilization as introduced by Vakil-Wood \cite{VakilWood-Discriminants}, representation stability as introduced by Church \cite{Church-HomStab} and Church-Farb \cite{ChurchFarb-RepStab}, and connections with arithmetic statistics as discussed, e.g., in \cite{CEF-RepStabFinField}. Indeed, our stabilization results in the prequel \cite{Howe-MRV1} can be thought of as a motivic version of representation stability for configuration spaces, with explicit stable values provided by a probabilistic interpretation motivated by arithmetic statistics over finite fields. Our results in this work go further, giving a motivic version of representation stability for symplectic and orthogonal groups (as well as new instances for symmetric groups) in a setting where there are no other known representation stability results. A key aspect of our theory is the introduction of motivic random variables, which provide a useful way of organizing and understanding the stable values by using probability theory and transporting intuition from characteristic $p$, where point counting methods apply, to characteristic zero. 

\subsection{Cohomological stabilization}\label{subsec:IntroCohomStab}
Let $Y$ be a polarized smooth projective variety -- i.e. a smooth projective variety equipped with a very ample line bundle $\Lc$ giving a closed embedding $Y \rightarrow \PP(\Gamma(Y, \Lc)^*)$.  Let $U_d$ be the space of smooth hypersurface sections of degree $d$ of $Y$ (with respect to this embedding). The non-constant part of the cohomology of the universal smooth hypersurface section $Z_d / U_d$ gives rise to a local system, the \emph{vanishing cohomology} $\VanQ$ on $U_d(\CC)$. The corresponding monodromy representation is to a symmetric, orthogonal, or symplectic group, depending on whether $\dim Y - 1$ is zero, even and positive, or odd -- we will refer to this group as the \emph{algebraic monodromy group}.

If $\pi$ is a representation of the algebraic monodromy group, then by composing the monodromy representation with $\pi$ we obtain a new local system $\VanQ^\pi$ on $U_d(\CC)$. In any of these situations, a partition $\sigma$ gives rise to a family of irreducible representations $\pi_{\sigma,d}$ of the algebraic monodromy groups for $U_d$ (by the theory of Young tableaux for symmetric groups, as in the theory of representation stability \cite{ChurchFarb-RepStab}, and by highest weight theory for symplectic and orthogonal groups -- cf. Section \ref{sec:RepTheory}). 

Each of the local systems 
\[ \VanQ^{\pi_{\sigma,d}} \]
is equipped with a natural variation of Hodge structure, and thus its cohomology is equipped with a mixed Hodge structure\footnote{To simplify some arguments we use the theory of Arapura \cite{Arapura-LerayMotivic} rather than that of Saito \cite{Saito-MHM1, Saito-MHM2} to produce this mixed Hodge structure -- cf. Subsection \ref{subsec:ProbVHS}.}. In particular, denoting the weight filtration by $W$,
\[ \Gr_W H^i_c(U_d(\CC), \VanQ) \]
is a direct sum of polarizable Hodge structures. 

We define $K_0(\HS)$ to be the Grothendieck ring of polarizable Hodge structures, which is the quotient of the free $\ZZ-$module with basis given by isomorphism classes $[V]$ of polarizable $\QQ-$Hodge structures $V$ by the relations $[V_1 \dsum V_2]=[V_1]+[V_2]$. It is a ring with $[V_1]\cdot[V_2]=[V_1 \otimes V_2]$.

We denote by
\[ \QQ(-1) = H^2_c(\AA^1), \]
the Tate Hodge structure of weight 2, and $\QQ(n)=\QQ(-1)^{\otimes -n}$. 

By the above considerations, we obtain for each $d$ a compactly supported Euler characteristic 
\[ \chi_\HS (H_c^\bullet(U_d(\CC), \VanQ^{\pi_{\sigma,d}})) := \sum_i (-1)^i [\Gr_W H^i_c(U_d(\CC), \VanQ^{\pi_{\sigma,d}})] \in K_0(\HS) \]
Our first main theorem states that this class stabilizes as $d\rightarrow \infty$ in the completion $\widehat{K_0(\HS)}$ of $K_0(\HS)$ for the weight filtration.

\begin{maintheorem}\label{thm:StabCohomHodge}
If $Y/\CC$ is a polarized smooth projective variety of dimension $n\geq 1$, then for any partition $\sigma$, 
\[ \lim_{d\rightarrow \infty} \frac{\chi_\HS (H_c^\bullet(U_d(\CC), \VanQ^{\pi_{\sigma,d}}))}{[\QQ(-\dim U_d)]} \]
exists in $\widehat{K_0(\HS)}.$ Moreover, it can be expressed by an explicit universal formula as a limit of elements in the subring of $K_0(\HS)$ generated by symmetric powers of the cohomology groups of $Y$ and $\QQ(1)$.  
\end{maintheorem}

The formula is universal in the sense that for each $n$ and $\sigma$ there is a power series in a single variable $t$ with coefficients given by polynomials in the symbols $[\Sym^i H^j(\bullet, \QQ)]$, $i\geq 0,\; 0 \leq j\leq 2n$, such that the limit is obtained for any $Y$ of dimension $n$ by substituting $\QQ(1)$ for $t$ and $[\Sym^i H^j(Y, \QQ)]$ for $[\Sym^i H^j(\bullet, \QQ)]$. This power series is even a rational function, and can be made explicit: Both Theorem~\ref{thm:StabCohomHodge} and its point-counting analog, Theorem~\ref{thm:StabCohomPointCount} below, are deduced from corresponding geometric stabilization results (Theorems \ref{thm:StabGeomMot} and \ref{thm:StableGeomPointCount} below), and the universal formulas are more natural in the geometric setting where they have a probabilistic interpretation. We give the geometric universal formulas below in Theorems \ref{thm:StabGeomMot} and \ref{thm:StableGeomPointCount}, and in Appendix \ref{appendix:AlgAndComp} we extract from the proofs an algorithm to obtain the cohomological universal formulas of Theorems \ref{thm:StabCohomHodge} and \ref{thm:StabCohomPointCount} for a given $\sigma$ and $n$.

\begin{example}\label{example:IntroPrimCohomVanish}
If $Y=\PP^n$, then $\VanQ$ is the local system of primitive cohomology coming from the universal smooth hypersurface. In this case, we deduce (cf. Example \ref{example:AppdxPrimCohomVanish} in Appendix \ref{appendix:AlgAndComp}) that
\[ \lim_{d\rightarrow \infty} \frac{\chi_\HS (H_c^\bullet(U_d(\CC), \VanQ))}{[\QQ(-\dim U_d)]} = 0. \]
We also obtain the point counting analog.  One could view this as evidence that the individual cohomology groups $H^i(U_d(\CC), \VanQ)$ are themselves stably trivial, though it is also possible, e.g., that the cohomology groups stabilize with cancellation between weights in different degrees (in upcoming work \cite{Howe-MRVRM} we make a general cohomological stabilization conjecture compatible with Theorems \ref{thm:StabCohomHodge}, \ref{thm:StabCohomPointCount}, \ref{thm:StabGeomMot} and \ref{thm:StableGeomPointCount}). We note that in this case $H^1$ is known to be stably trivial by Nori's connectivity theorem \cite[Corollary 4.4]{Nori-Connectivity}\footnote{We thank Bhargav Bhatt for pointing out the connection between our results and Nori's connectivity theorem.}. 
\end{example}

\begin{remark} \label{rmk:StabCohomHodgeInitial} \hfill\\
\textbf{1.} Theorem \ref{thm:StabCohomHodge} is new except for the trivial local system (i.e. the cohomology of $U_d(\CC)$ itself), corresponding to $\sigma=\emptyset$, where it is due to Vakil and Wood \cite{VakilWood-Discriminants}. The methods of \cite{VakilWood-Discriminants} will play an important role in our proof. 

\noindent\textbf{2.} Because the $U_d$ are smooth but not proper, the Hodge Euler characteristic can potentially contain less information than the cohomology groups (with mixed Hodge structures) themselves. Nevertheless, in practice it seems a large amount of information is retained: for example, for $Y=\PP^n$ and the trivial local system, Tommasi \cite{Tommasi-Stable} has shown the individual cohomology groups also stabilize and from her computation one finds that there is no cancellation between degrees. We refer the reader to \cite[1.2]{VakilWood-Discriminants} for more on this Occam's razor principle for Hodge structures. 
\end{remark}

The local systems $\VanQ^{\pi_{\sigma,d}}$ also have $l$-adic incarnations $\VanQl^{\pi_{\sigma,d}}$ in \'{e}tale cohomology, and we study these over finite fields. We obtain the point-counting analog

\begin{maintheorem}\label{thm:StabCohomPointCount}
If $Y /\FF_q$ is a polarized smooth projective variety of dimension $n\geq1$ and $\sigma$ is a partition, then
\[ \lim_{d \rightarrow \infty} q^{-\dim U_d}\sum_i (-1)^i \Tr \Frob_q \actsonr H_c^\bullet (U_{d,\overline{\FF_q}}, \VanQl^{\pi_{\sigma, d}}) \]
exists in $\QQ$ (here the limit is of elements of $\QQ$ in the archimedean topology). Furthermore, for a fixed $\sigma$ and dimension $n$, the limit is given by an explicit, computable universal formula. The universal formula is a rational function of $q$ and symmetric functions of the eigenvalues of Frobenius acting on the cohomology of $Y$. 
\end{maintheorem}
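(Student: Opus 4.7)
The plan is to follow the same broad strategy as for Theorem \ref{thm:StabCohomHodge}, now in the point-counting setting. One converts the trace of $\Frob_q$ on $H^\bullet_c(U_d, \VanQl^{\pi_{\sigma,d}})$ into point counts on relative configuration spaces of the universal smooth hypersurface section $Z_d/U_d$, and then extracts the limit from the stabilization of these configuration spaces in the completed Grothendieck ring of $\FF_q$-varieties --- which forms the geometric core of the paper and furnishes the variety-level input that also underlies Theorem \ref{thm:StabCohomHodge}.

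First, by the Grothendieck--Lefschetz trace formula,
\[ \sum_i (-1)^i \Tr \Frob_q \actsonr H^\bullet_c(U_{d,\overline{\FF_q}}, \VanQl^{\pi_{\sigma,d}}) \;=\; \sum_{x \in U_d(\FF_q)} \Tr\bigl(\Frob_q \actsonr (\VanQl^{\pi_{\sigma,d}})_x\bigr), \]
and the stalk at $x$ is obtained by applying $\pi_{\sigma,d}$ to the $\Frob_q$-action on the vanishing cohomology $(\VanQl)_x$ of the smooth hypersurface section $Z_x \subset Y$ parameterized by $x$. For a fixed partition $\sigma$ and fixed $n = \dim Y$, classical character theory of the symmetric, symplectic, and orthogonal groups (together with representation stability \cite{ChurchFarb-RepStab} for the symmetric group, and highest-weight stabilization for the other two) expresses $\tr \pi_{\sigma,d}$, for $d$ sufficiently large, as a fixed universal polynomial $P_{\sigma,n}$ in the power sums $p_k$ of the formal eigenvalues. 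Evaluated at Frobenius, $p_k = \Tr(\Frob_q^k \actsonr (\VanQl)_x)$; since the vanishing cohomology lives in a single degree of $H^*(Z_x)$ complementary to the image of $H^*(Y)$, this quantity equals $\pm \#Z_x(\FF_{q^k})$ minus a universal expression in the $\Frob_q$-eigenvalue symmetric functions of $H^*(Y)$ and in powers of $q^k$.

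Substituting into $P_{\sigma,n}$ and summing over $x \in U_d(\FF_q)$ rewrites the right-hand side as a finite linear combination, with universal coefficients depending only on $\sigma$, $n$, $q$, and the symmetric functions of Frobenius eigenvalues on $H^*(Y)$, of point counts $\#(Z_d^{\times_{U_d} k})(\FF_{q^m})$. A standard inclusion--exclusion on the diagonals converts each ordered fiber power into relative unordered configuration spaces of $Z_d/U_d$ --- precisely the objects whose motivic classes stabilize. Applying the point-counting motivic measure to this geometric stabilization then yields existence of the limit, and the explicit universal formula can be read directly from the limiting class as a rational function of $q$ and the symmetric functions of the Frobenius eigenvalues on $H^*(Y)$.

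The main obstacle is uniform control after normalization by $q^{-\dim U_d}$. The character $\tr \pi_{\sigma,d}$ is only universal for $d$ large, and even inside the stable range the expansion of $P_{\sigma,n}$ pulls in contributions from configuration spaces of size up to a bound depending on $\sigma$; one must show both that the discrepancy between the actual character and the universal one contributes strictly less than $q^{\dim U_d}$, and that the tails arising in the configuration-space expansion converge uniformly. I would handle both by Deligne-style weight bounds coming directly from the motivic stabilization: the stable classes of the configuration spaces sit in a bounded range of weights relative to $q^{\dim U_d}$, so at each fixed order in $1/q$ only finitely many terms contribute, and the $\FF_q$-point-count limit commutes with the finite linear combination coming from $P_{\sigma,n}$.
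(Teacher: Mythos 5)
Your overall skeleton (Grothendieck--Lefschetz, writing the character of $\pi_{\sigma,d}$ as a stable symmetric-function expression, and subtracting the constant part $[Y^\old]$ of the cohomology of $Z_d$ via additivity of the power sums) matches the paper's reduction of Theorem \ref{thm:StabCohomPointCount} to a statement about the random variables $(p_k',[Z_d/U_d])$. But your key input is wrong: the paper does \emph{not} deduce the finite-field limits from the stabilization of configuration spaces in the completed Grothendieck ring (Theorem \ref{thm:StabGeomMot}); it deduces them from Theorem \ref{thm:StableGeomPointCount}, whose proof is Poonen's sieving Bertini theorem with Taylor conditions \cite[Theorem 1.2]{Poonen-Bertini} reinterpreted as asymptotic independence of the Bernoulli variables $X_y$. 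This is not a cosmetic difference: point counting over $\FF_q$ is not continuous for the dimension filtration, so it does not extend to $\motcomp$, and one cannot ``apply the point-counting motivic measure'' to a limit taken in the completion. Concretely, in the Vakil--Wood-style expansion the stratum $W^\tau_{\geq\beta}$ is a vector bundle over $\Conf^\gamma Y$ only for $d \gg |\tau|+|\beta|$; for a \emph{fixed} $d$ the sum over singular loci necessarily involves $|\beta|$ comparable to $d$, where no such description (and no uniform bound on point counts) is available from the motivic argument. Your proposed fix via ``Deligne-style weight bounds'' does not close this gap: the dimension filtration bounds dimensions but not the number of top-dimensional components (equivalently, the constants in $\#X(\FF_q)\le C\,q^{\dim X}$), which grow with the strata, so bounding the tail uniformly in $d$ is exactly the medium/high-degree point estimate that constitutes Poonen's sieve. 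In other words, the step you treat as an afterthought is the actual theorem you would have to reprove.

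Two smaller points. First, the normalization: the paper replaces $q^{-\dim U_d}$ by $\#U_d(\FF_q)^{-1}$ using $\lim_d \#U_d(\FF_q)/q^{\dim U_d}=\zeta_Y(n+1)^{-1}$, which is again Poonen's theorem, not a formal consequence of the motivic statement. Second, when $n-1$ is odd (or there are Tate twists) Frobenius lands only in the group of homotheties $G'$ of the pairing, not in $G$ itself, so to make $\Tr\,\Frob_q$ on $\VanQl^{\pi_{\sigma,d}}$ well defined you need the specific lift of $\pi_{\sigma,d}$ to $G'$ furnished by the Schur polynomials of Proposition \ref{prop:SurjLambdaSchur} (Remark \ref{rmk:LiftRep}); your ``universal polynomial in power sums of eigenvalues'' implicitly uses this but should say so. With Poonen's theorem substituted for the motivic limit as the source of the asymptotic distributions and independence of the $(p_k',[Z_d/U_d])$, the rest of your argument does line up with the paper's proof in Subsection \ref{subsec:PCCohomStab}.
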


\begin{remark} For the trivial local system, Theorem \ref{thm:StabCohomPointCount} is due to Poonen \cite{Poonen-Bertini}. Furthermore, one of the main technical inputs in our proof of Theorem \ref{thm:StabCohomPointCount} is Poonen's sieving Bertini theorem with Taylor conditions \cite[Theorem 1.2]{Poonen-Bertini} (cf. Remark \ref{rmk:PoonenAsympInd} below). 
\end{remark}

\subsection{Motivic stabilization}\label{subsec:IntroMotStab}

Returning to the case of $Y/\CC$, our goal now is to systematically understand the stabilization of natural varieties over $U_d$ produced from the universal family $Z_d$ (e.g., relative configuration spaces). 

To study this, we will use the notion of a relative Grothendieck ring of varieties. For $S/\CC$ a variety, we denote by $K_0(\Var/S)$ the ring spanned by isomorphism classes $[X/S]$ of varieties $X/S$ (i.e. maps of varieties $f: X \rightarrow S$) modulo the relations 
\[ [X/S] = [X\backslash Z / S] + [Z/S] \]
for any closed subvariety $Z \subset X$. It is a ring with 
\[ [X_1/S]\cdot[X_2/S]=[X_1 \times_S X_2 / S].\]
We will write $K_0(\Var)$ for $K_0(\Var/\CC)$. 

For any $S/\CC$ there is a natural \emph{pre-$\lambda$} structure on the Grothendieck ring $K_0(\Var / S)$ which gives an efficient way to work with constructions such as relative symmetric powers and relative configuration spaces (cf. \cite[Subsection  \ref{MRV1-subsec:GrothPow}]{Howe-MRV1}). This pre-$\lambda$ structure can be interpreted as a set-theoretic pairing
\[ (\; ,\;) : \Lambda \times K_0(\Var / S) \rightarrow K_0(\Var / S) \]
where 
\[ \Lambda = \ZZ[h_1, h_2, h_3,...] \]
is the ring of symmetric functions \cite{Macdonald-SymmetricFunctions} (the $h_i$ are the complete symmetric functions). If we fix the second variable, the pairing gives a ring homomorphism 
\[ \Lambda \rightarrow K_0(\Var / S) \]
characterized by
\[ (h_k, [T/S] ) = [\Sym^k_{S} T / S] \]
where the subscript on the symmetric power denotes that it is taken relative to $S$. 

Our geometric stabilization will take place in 
\[ \motcomp := \widehat{K_0(\Var)[\LL^{-1}]} \]
where $\LL=[\AA^1]$ and the completion is with respect to the dimension filtration (cf. \cite{VakilWood-Discriminants}). For $x \in K_0(\Var/S)$, we will denote by $x_{\motcomp}$ the element of $\motcomp$ obtained by forgetting the structure morphism to $S$ (i.e. by applying the map $[T/S] \mapsto [T]$). 

As a first approximation, our motivic stability result says

\begin{theorem*}[first version of Theorem \ref{thm:StabGeomMot}]
If $Y/\CC$ is a polarized smooth projective variety of dimension $n\geq 1$, then, for any symmetric function $f \in \Lambda$, 
\[ \lim_{d \rightarrow \infty} \frac{(f, [Z_d/U_d])_{\motcomp}}{\LL^{\dim U_d}} \]
exists in $\motcomp$. 
\end{theorem*}

We will refine this statement with explicit formulas in Theorem \ref{thm:StabGeomMot} below. To motivate these, it is helpful to first consider the point-counting analog, which we do below. First, however, we give some examples and remarks:

\begin{remark}\label{rmk:VWMotZeta}
For $f=1$, this result is due to Vakil-Wood \cite{VakilWood-Discriminants}, who compute
\[ \lim_{d\rightarrow \infty}\frac{[U_d]}{\LL^{\dim U_d}}=\zeta_Y(n+1)^{-1}. \]
Here $\zeta_Y(n+1)$ is obtained by substituting $t=\LL^{-(n+1)}$ in the Kapranov zeta function 
\[ Z_{Y}(t)= 1 + [Y]\cdot t + [\Sym^2 Y] \cdot t^2 + ... \in 1 + t K_0(\Var)[[t]]. \]
\end{remark}

\begin{example}[\emph{Relative generalized configuration spaces stabilize}]\label{example:RelConfStab}
For any generalized partition $\tau=a_1^{l_1}\cdot...\cdot a_m^{l_m}$ we denote by $\Conf^\tau_{S} T/S$ the relative generalized configuration space which parameterizes collections of $\sum l_i$ distinct points lying in a fiber of $T \rightarrow S$, of which $l_i$ are labeled by $a_i$ for each $1\leq i\leq m$. There is a unique $c_\tau \in \Lambda$ such that for all $S$ and all $T/S$,
\[ (c_\tau, [T/S] ) = [\Conf^\tau_{S} T / S]. \]
Indeed, \cite[3.19]{VakilWood-Discriminants} gives an explicit formula for the class of a generalized configuration space in terms of the classes of symmetric powers and shows that this formula is unique; this translates to a formula for $c_\tau$ in terms of complete symmetric functions. 
 
In particular, applied to $c_\tau$ our theorem shows
\[ \lim_{d \rightarrow \infty} \frac{[\Conf^\tau_{U_d} Z_d]}{\LL^{\dim U_d}} \]
exists in $\motcomp$. (In fact, we will \emph{prove} Theorem \ref{thm:StabGeomMot} below by showing this statement and using that the $c_\tau$ form a basis for $\Lambda$). 
\end{example}

\begin{example}
For $Y=\PP^n$ and $f=h_1$, so that we are considering $Z_d$ the universal family of smooth hypersurfaces in $\PP^n$, we obtain (using Remark \ref{rmk:VWMotZeta} and Example \ref{example:AvgHSMot} below)
\[ \lim_{d \rightarrow \infty} \frac{[Z_d]}{\LL^{\dim U_d}} = [\PP^{n-1}]\cdot \zeta_{\PP^n}(n+1)^{-1} \]
where $\zeta_{\PP^{n}}(n+1)$ is the Kapranov zeta function of $\PP^n$ evaluated at $\LL^{-(n+1)}$. 
\end{example}

\begin{remark}\label{rem:DedCohomGeom}
The cohomological Theorem~\ref{thm:StabCohomHodge} is deduced from the geometric Theorem~\ref{thm:StabGeomMot} below by taking $f$ to be a (symmetric, symplectic, or orthogonal) Schur polynomial $s_\sigma$ then carefully modifying the result to remove the contribution of the constant part of the cohomology of $Z_d$ (which can be described explicitly in terms of the cohomology of $Y$). The key tool that makes this extraction possible is the fact that the Adams operations on a pre-$\lambda$ ring, given by pairing with power sum symmetric functions, are additive. 
\end{remark} 

\subsection{Probabilistic interpretation}\label{subsec:ProbInterpretation}

In Theorem \ref{thm:StableGeomPointCount} below we give a point-counting geometric stabilization theorem with explicit formulas for the limit stated using probabilistic language. Motivated by the explicit formulas in the point-counting setting, we then state our full motivic stabilization result as Theorem \ref{thm:StabGeomMot} below. 

\newcommand{\PConf}{\mathrm{PConf}}

For $Y/ \FF_q$ a polarized smooth projective variety, we consider the subring $K_0(\Loc_{\QQ_l} U_d)'$ of the Grothendieck ring of lisse $l$-adic sheaves consisting of virtual sheaves with integral characteristic series of Frobenius at every closed point. It is a pre-$\lambda$ ring, and it admits an algebraic probability measure $\mu_d$ (in the sense of \cite[Section \ref{MRV1-sec:Probability}]{Howe-MRV1}) with values in $\QQ$ where the expectation $\EE_{\mu_d}$ is given by averaging the traces of Frobenius over $\FF_q$-rational points (or, equivalently by the Grothendieck-Lefschetz formula, by computing the alternating sum of traces on the compactly supported cohomology then dividing by $\# U_d(\FF_q)$). 
	
	In $K_0(\Loc_{\QQ_l} U_d)'$, there is a class $[Z_d / U_d]$ given by the alternating sum of the cohomology local systems of $Z_d$. We can view $K_0(\Loc_{\QQ_l} U_d)'$ as a ring of \emph{motivic random variables} lifting the ring of classical random variables on the discrete probability space $U_d(\FF_q)$ with uniform distribution, and from this perspective the random variable $[Z_d/U_d]$ lifts the classical random variable assigning to a smooth hypersurface section $u \in U_d(\FF_q)$ the number of $\FF_q$ points on $Z_{d,u}$. 

Let 
\[ p_k' := \frac{1}{k} \sum_{d|k} \mu(d/k) p_k \in \Lambda_\QQ \]  
be the Mobius inverted power sum polynomials. We can view $(p_k', [Z_d/U_d])$ as a motivic lift of the classical random variable assigning to a smooth hypersurface section $u \in U_d(\FF_q)$ the number of degree $k$ closed points on $Z_{d,u}$. 

\begin{maintheorem}\label{thm:StableGeomPointCount}
Let $Y /\FF_q$ be a polarized smooth projective variety of dimension $n\geq 1$.  Then, for a formal variable $t$, 
\[ \lim_{d \rightarrow \infty} \EE_{\mu_d}\left[ (1+t)^{(p_k', [Z_d/U_d])} \right] = \left(1+ \frac{q^{nk} - 1}{q^{(n+1)k}-1} t \right)^{\# \textrm{ closed points of degree $k$ on } Y} \]
and the $(p_k', [Z_d/U_d])$ are asymptotically independent for distinct $k$, i.e. for formal variables $t_1, t_2,...,t_m$,
\[ \lim_{d\rightarrow \infty} \EE_{\mu_d}\left [ \prod_k (1+t_k)^{(p_k', [Z_d/U_d]} \right] = \prod_k \lim_{d\rightarrow \infty} \EE_{\mu_d}\left [ (1+t_k)^{(p_k', [Z_d/U_d])} \right]. \]
Here exponentiation is interpreted in terms of the standard power series
\[ (1+t)^a =\exp(\log(1+t)\cdot a) = \sum_i \binom{a}{i} t^i, \]
and $\EE_{\mu_d}$ and limits are applied individually to each coefficient of a power series.
  
In particular, for any symmetric function $f \in \Lambda$,
\[ \lim_{d\rightarrow \infty} \EE_{\mu_d}[(f,[Z_d/U_d])] \]
exists in $\QQ$, and can be computed explicitly by expressing $f$ as a polynomial in the $p_k'$ and applying asymptotic independence plus the explicit distributions above. 
\end{maintheorem}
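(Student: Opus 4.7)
The strategy is to interpret $(p_k', [Z_d/U_d])$ pointwise on $U_d(\FF_q)$ as the counting function of degree $k$ closed points on $Z_{d,u}$, decompose the exponential generating function as a finite product indexed by closed points of $Y$, and apply Poonen's sieve with Taylor conditions to compute each limiting joint probability.

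Concretely, let $Y_{(k)}$ denote the finite set of degree $k$ closed points of $Y$ and write $N_k(u)$ for the value at $u \in U_d(\FF_q)$ of the classical random variable underlying $(p_k', [Z_d/U_d])$, so that $N_k(u)$ is the number of degree $k$ closed points of $Z_{d,u}$. Since any degree $k$ closed point of $Z_{d,u}$ is precisely a degree $k$ closed point of $Y$ that happens to lie in $Z_{d,u}$, we have the pointwise identity of polynomials in $t$
\[ (1+t)^{N_k(u)} \;=\; \prod_{x \in Y_{(k)}} \left(1 + t \cdot 1_{x \in Z_{d,u}}\right), \]
and expanding this \emph{finite} product and applying $\EE_{\mu_d}$ yields
\[ \EE_{\mu_d}\!\left[(1+t)^{(p_k', [Z_d/U_d])}\right] \;=\; \sum_{S \subset Y_{(k)}} t^{|S|} \cdot \mathrm{Pr}_{\mu_d}(S \subset Z_{d,u}). \]

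The crucial input is Poonen's sieving Bertini theorem with Taylor conditions \cite[Theorem 1.2]{Poonen-Bertini}. For any finite set $\{x_1,\dots,x_m\}$ of closed points of $Y$ together with prescribed $1$-jet conditions, this theorem gives in the limit $d \to \infty$ a product factorization in which the local factor at $x_i$ is the density $\delta_i$ of the prescribed $1$-jet subset, while factors at other closed points $y$ remain $1 - q^{-(n+1)\deg y}$. For $x_i \in Y_{(k)}$, the condition ``$x_i \in Z_f$ and $Z_f$ smooth at $x_i$'' cuts out the subset of $1$-jets with zero constant term and nonzero linear part, of density $\delta_i = q^{-k} - q^{-(n+1)k}$. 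Dividing by the local factor $1 - q^{-(n+1)k}$ for unconditional smoothness gives
\[ \lim_{d \to \infty} \mathrm{Pr}_{\mu_d}(S \subset Z_{d,u}) \;=\; \prod_{x \in S} \frac{q^{nk} - 1}{q^{(n+1)k} - 1}, \]
and substituting into the finite sum above collapses it to the claimed $\left(1 + \frac{q^{nk}-1}{q^{(n+1)k}-1}\, t\right)^{|Y_{(k)}|}$.

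Asymptotic independence across distinct degrees follows from the identical argument applied to the finite set $\bigsqcup_k Y_{(k)}$ (finite once only finitely many $k$ are considered); Poonen's theorem produces independent local factors at each distinct closed point of $Y$, regardless of degree, yielding the desired product form. Finally, since the $p_k'$ generate $\Lambda_\QQ$ polynomially, any $f \in \Lambda$ is a polynomial in the $p_k'$, so $(f, [Z_d/U_d])$ is the corresponding polynomial in the $(p_k', [Z_d/U_d])$, and $\EE_{\mu_d}[(f, [Z_d/U_d])]$ is a finite linear combination of coefficients of the joint generating function, hence converges to the analogous combination extracted from the explicit formula. The main technical hurdle is packaging the conditions ``vanish with nonzero derivative'' as the correct $1$-jet Taylor conditions in \cite{Poonen-Bertini} and verifying that the local factors combine exactly as claimed; once this is arranged the remaining computation is purely combinatorial.
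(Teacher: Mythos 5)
Your proposal is correct and follows essentially the same route as the paper: both reduce the statement to the asymptotic independence of the Bernoulli indicator variables $1_{x \in Z_{d,u}}$ attached to closed points $x$ of $Y$, with limiting densities extracted from Poonen's sieving Bertini theorem with Taylor conditions \cite[Theorem 1.2]{Poonen-Bertini}, and then identify $(p_k', [Z_d/U_d])$ pointwise with the count of degree-$k$ closed points of the fiber. Your computation of the local $1$-jet densities $\bigl(q^{nk}-1\bigr)/\bigl(q^{(n+1)k}-1\bigr)$ and the division by the unconditional smoothness factor is exactly the content the paper packages into its Theorem on asymptotic independence of the $X_y$.
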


\begin{remark}\label{rmk:PoonenAsympInd}
The probabilities in Theorem \ref{thm:StableGeomPointCount} have a simple geometric origin: Each closed point $y$ of degree $k$ in  $Y$ defines an indicator Bernoulli random variable on $U_d(\FF_q)$ that is $1$ at $u \in U_d(\FF_q)$ if $y$ is contained in the fiber $Z_{d,u}$ and 0 otherwise. These random variables are asymptotically independent each with asymptotic probability of being $1$ determined by the proportion of the number of smooth first order Taylor expansions vanishing at~$y$, ${q^{nk}-1}$, to the total number of smooth first order Taylor expansions at $y$, ${q^{(n+1)k}-1}$. The asymptotic independence of these Bernoulli random variables is a simple consequence of a result of Poonen \cite[Theorem 1.2]{Poonen-Bertini}, and using this it is straightforward to prove Theorem~\ref{thm:StableGeomPointCount}. 
\end{remark}

\begin{example}[\emph{The average smooth hypersurface in $\PP^n$, point counting version}]\label{example:AvgHSPC}
Taking $Y=\PP^n$ and $f=p_1$ in Theorem \ref{thm:StableGeomPointCount}, we find 
\begin{eqnarray*}
\lim_{d \rightarrow \infty} \frac{\sum_{u \in U_d(\FF_q)} \# Z_{d,u}(\FF_q)}{\# U_d(\FF_q)} =\lim_{d\rightarrow \infty} \EE_{\mu_d}\left[ [Z_d/U_d] \right ] & = & \# \PP^{n}(\FF_q) \cdot \frac{q^n -1}{q^{n+1}-1}\\ 
& = & \# \PP^{n-1}(\FF_q)
\end{eqnarray*}
Thus, in this sense the average smooth hypersurface in $\PP^n$ is $\PP^{n-1}$. 
\end{example}

\begin{remark}
Explicit formulas in Theorem \ref{thm:StabCohomPointCount} can be deduced from Theorem \ref{thm:StableGeomPointCount} as in Remark \ref{rem:DedCohomGeom}. We carefully describe an algorithm for this in Appendix \ref{appendix:AlgAndComp}. 
\end{remark}

We now return to $Y / \CC$. Because $U_d$ is an open in affine space, $[U_d]$ is invertible in $\motcomp$\footnote{We thank Jesse Wolfson for pointing out this fact to us.}. Thus we obtain an algebraic probability measure on $K_0(\Var / U_d)$ with values in $\motcomp$, characterized by the expectation function 
\[ \EE_{\mu_d} [ x ] = \frac{x_{\motcomp}}{[U_d]}. \]
The motivic stabilization stated in subsection \ref{subsec:IntroMotStab} can then be refined to give explicit formulas, analogous to Theorem \ref{thm:StableGeomPointCount}: 

\begin{maintheorem}\label{thm:StabGeomMot}
Let $Y/\CC$ be a polarized smooth projective variety of dimension $n\geq 1$. For $t$ a formal variable,
\[ \lim_{d \rightarrow \infty} \EE_{\mu_d}\left[ (1+t)^{(p_k', [Z_d/U_d])} \right] = \left(1+ \frac{\LL^{nk} - 1}{\LL^{(n+1)k}-1} t \right)^{(p_k', [Y])} \]
in $\motcomp \otimes \QQ$ and the $(p_k', [Z_d/U_d])$ are asymptotically independent for distinct $k$,  i.e. for formal variables $t_1, t_2,...,t_m$,
\[ \lim_{d\rightarrow \infty} \EE_{\mu_d}\left [ \prod_k (1+t_k)^{(p_k', [Z_d/U_d])} \right] = \prod_k \lim_{d\rightarrow \infty} \EE_{\mu_d}\left [ (1+t_k)^{(p_k', [Z_d/U_d])} \right]. \]
Here exponentiation is interpreted in terms of the standard power series
\[ (1+t)^a =\exp(\log(1+t)\cdot a) = \sum_i \binom{a}{i} t^i, \]
and $\EE_{\mu_d}$ and limits are applied individually to each coefficient of a power series.

Furthermore, for any symmetric function $f \in \Lambda$,
\[ \lim_{d\rightarrow \infty} \EE_{\mu_d}[(f,[Z_d/U_d])] \]
exists in $\motcomp$. It can be computed explicitly in $\motcomp \otimes \QQ$ by expressing $f$ as a polynomial in the $p_k'$ and applying asymptotic independence plus the explicit distributions above. 
\end{maintheorem}

\begin{remark} As in \cite{Howe-MRV1}, the significance of the terms $(p_k', \bullet)$ appearing in Theorem \ref{thm:StabGeomMot} are that they give the exponents in the naive Euler product for the Kapranov zeta function. It is in this sense that they are natural motivic avatars for the number of closed points of a fixed degree on a variety over a finite field. We note that the $p_k'$ are in $\Lambda_\QQ$, and typically not in $\Lambda$, which explains the need to tensor with $\QQ$.
\end{remark}

\begin{example}\label{example:ExplicitFormula-p1} 
We denote by $\PConf^k_{U_d} Z_d / U_d$ the relative configuration space of $k$ distinct ordered points in $Z_d$. In the notation of Example \ref{example:RelConfStab}, 
\[ \PConf^k_{U_d} Z_d= \Conf^{a_1 \cdot ... \cdot a_k}_{U_d} Z_d. \]
We have the identity
\begin{eqnarray*} [\PConf^k_{U_d} Z_d / U_d ] & = & ([Z_d/U_d])\cdot([Z_d/U_d]-1)\cdot...([Z_d/U_d]-k+1) \\
&=& \left( (p_1)(p_1 - 1)...(p_1 - k +1), [Z_d / U_d] \right ). 
\end{eqnarray*}
Thus, from Theorem \ref{thm:StabGeomMot}, we obtain
\[ \lim_{d \rightarrow \infty} \frac{[\PConf^k_{U_d} Z_d]}{[U_d]} =  \PConf^k Y \cdot \left(\frac{\LL^{n}-1}{\LL^{n+1}-1}\right)^k \]
in $\motcomp\otimes\QQ$ (in fact, adapting the proof one finds this holds already in $\motcomp$). Using Theorem \ref{thm:StableGeomPointCount}, we also obtain the corresponding point-counting result. 
\end{example}

\begin{example}[\emph{The average smooth hypersurface in $\PP^n$, motivic version.}]\label{example:AvgHSMot}
Taking $Y=\PP^n$ and $k=1$ in Example \ref{example:ExplicitFormula-p1}, we obtain
\begin{eqnarray*} \lim_{d\rightarrow \infty} \frac{[Z_d]}{[U_d]} & = &[\PP^{n}] \cdot \left(\frac{\LL^{n}-1}{\LL^{n+1}-1}\right) \\
& = & [\PP^{n-1}]
\end{eqnarray*}
Thus, in this sense the average smooth hypersurface in $\PP^n$ is $\PP^{n-1}$ (cf. Example \ref{example:AvgHSPC} for the point counting version). 
\end{example}

We prove Theorem \ref{thm:StabGeomMot} by extending the methods of Vakil-Wood \cite{VakilWood-Discriminants} to show that the limit exists and is equal to a more complicated explicit formula in terms of configuration spaces of $Y$, then comparing with a generating function constructed via the power structure on the Grothendieck ring of varieties. In our study of configuration spaces \cite{Howe-MRV1}, we also used a generating function constructed via the power structure, however, the present case is more complicated -- the form of the probabilities which appear means that the series which we exponentiate cannot have effective coefficients. In order to compute with such a series, we use the motivic Euler products of Bilu \cite{bilu:thesis, bilu:thesis-article}.

\begin{remark} 
We note that in an earlier version of this article which was circulated and appeared on arXiv, the explicit stable values of Theorem \ref{thm:StabGeomMot} were stated as a conjecture and proven only in the special case of Example \ref{example:ExplicitFormula-p1}; while the generating function strategy was outlined, we could not prove the necessary formula for a power of a non-effective series. We arrived at a full proof only after discussions of \cite{bilu:thesis} with Bilu. In joint work in progress \cite{howe-bilu:euler-and-stablization}, we use motivic Euler products to give a simultaneous generalization of Theorem \ref{thm:StabGeomMot} and the results on hypersurface sections of \cite{VakilWood-Discriminants} that is closer in spirit to the proofs of the point-counting analogs. 
\end{remark}

\subsection{Connections with representation stability and the prequel}\label{subsec:ConnRepStab}
As noted in the introduction to the prequel \cite{Howe-MRV1}, one consequence of representation stability for configuration spaces (as studied in \cite{Church-HomStab, ChurchFarb-RepStab}) is cohomological stabilization for certain families of local systems factoring through the monodromy representation. Our results give an analog of this for the spaces $U_d$, at least at the level of the Hodge Euler characteristic: Generically the image of the monodromy representation is a lattice in the algebraic monodromy group, and thus, by superrigidity, when we consider algebraic representations we are really considering all natural local systems which factor through the monodromy representation. We view Theorems \ref{thm:StabCohomHodge} and \ref{thm:StabCohomPointCount} together as strong evidence that the individual cohomology groups of the local systems $\VanQ^{\pi_{\sigma,d}}$ themselves stabilize, and that the unstable cohomology has sub-exponential growth (this kind of relation between point-counting and cohomological stability is studied in~\cite{FarbWolfson-EtaleStability}).

As with the analogous results of the prequel \cite{Howe-MRV1} for configurations spaces, the form of Theorems \ref{thm:StableGeomPointCount} and \ref{thm:StabGeomMot} suggests that the stabilization is of modules over the (conjectural) stable cohomology of $U_d$. In \cite{Howe-MRVRM} we make an explicit conjecture to this effect motivated by these results and an analogy with moduli of curves. Note that hypersurface sections are more difficult to analyze using ideas from representation stability than configuration spaces because there are no obvious stabilizing maps, however, it is natural to hope that with a more sophisticated approach one might find a richer structure explaining and strengthening our results.

\subsection{Related work}

Besides the works of Poonen \cite{Poonen-Bertini} and Vakil and Wood \cite{VakilWood-Discriminants} discussed above, we highlight some other related threads in the literature:

\subsubsection{Plane curves}
Bucur, David, Feigon, and Lalin \cite{BDFL-FluctuationsPlaneCurves} use Poonen's sieve with refined control over the error term to show that the distribution of the number of degree 1 points on a random smooth plane curve over $\FF_q$ is asymptotically Gaussian (letting both $q$ and $d$ go to infinity in a controlled way). They use the same probabilistic interpretation of Poonen's results that we adopt here, and in particular Theorem 1.1 of \cite{BDFL-FluctuationsPlaneCurves} is a refined version of the statement that the number of degree 1 points on a smooth curve of degree $d$ over $\FF_q$ is asymptotically distributed as a sum of $\# \PP^2(\FF_q)$ independent Bernoulli random variables as $d \rightarrow \infty$. 

\subsubsection{Moduli of curves}
Achter, Erman, Kedlaya, Wood, and Zurieck-Brown \cite{AEKWZB-HeursticRandomCurves} use results on the stable cohomology of moduli of genus $g$ curves with $n$ marked points $\Mc_{g,n}$ to make conjectures about the asymptotic distribution of degree 1 points on curves over $\FF_q$ as the genus $g \rightarrow \infty$. They give a heuristic explanation of why the unstable cohomology should not contribute and deduce an asymptotic distribution from the stable cohomology under this assumption. Their main observation about the stable cohomology can be rephrased by saying that for $C_g / M_g$ the universal genus $g$ curve, $[C_g/M_g]$ is asymptotically a Poisson random variable with respect to the Hodge measure. In \cite{Howe-MRVRM}, we generalize this by showing that the natural motivic random variables $(p_k', [C_g/M_g])$ converge to asymptotically independent Poisson random variables with respect to the Hodge measure, and explain a connection with the random matrix equidistribution results of Katz-Sarnak \cite{katz-sarnak} in this setting as well as the settings of smooth hypersurface sections and configuration spaces. 

\subsubsection{Complete intersections}
Bucur and Kedlaya \cite{BucurKedlaya-Complete} have generalized Poonen's sieve results to complete intersections and used them to study the number of degree~1 points on a random complete intersection over a finite field. Using their results, one finds point-counting stabilization results analogous to Theorems~\ref{thm:StabCohomPointCount}~and~ \ref{thm:StableGeomPointCount} for the vanishing cohomology and relative configuration spaces of the universal smooth complete intersection. We expect that the techniques of Vakil and Wood, as extended in this paper to prove Theorems~ \ref{thm:StabCohomHodge}~and~\ref{thm:StabGeomMot}, can also be applied to complete intersections, but we do not carry this out in the present work. 

\subsubsection{Branched covers}
There has been some interesting related work on cyclic branched covers of $\PP^n$. The monodromy representations coming from these covers are studied by Carlson and Toledo \cite{CarlsonToledo-CyclicMonodromy} in their work on fundamental groups of discriminant complements. In the case $n=1$, Chen \cite{Chen-Burau} has recently proved a stability result for the cohomology with coefficients in the local system corresponding to the first cohomology of the universal family of cyclic branched covers. From this he deduces the corresponding point counting result, which he phrases using probability. It would be interesting to extend this computation to the natural local systems constructed out of the first cohomology via the $\lambda$-ring structure, and to extend the probabilistic interpretation to a motivic setting. 

For $n=1$, it would also be interesting to study these problems for Hurwitz spaces, where cohomological stabilization with trivial coefficients is shown in \cite{EVW-Hurwitz}.

\subsection{Outline}
In Section \ref{sec:ProbabilitySpaces} we introduce the algebraic probability measures (in the sense of \cite{Howe-MRV1}) used in our proofs. In Section \ref{sec:RepTheory} we describe some aspects of the representation theory of symmetric, symplectic, and orthogonal groups.  In Section~\ref{sec:vancohom} we recall the construction of vanishing cohomology and explain in more detail the construction of the local systems corresponding to a representation $\pi$. In Section \ref{sec:PointCountingResults} we prove our point counting Theorems \ref{thm:StabCohomPointCount} and \ref{thm:StableGeomPointCount}. Finally, in Section \ref{sec:BettiResults} we prove our stabilization results over $\CC$, Theorems \ref{thm:StabCohomHodge} and \ref{thm:StabGeomMot}.

\subsection{Notation}
For partitions and configuration spaces we follows the conventions of Vakil and Wood \cite{VakilWood-Discriminants}, except that where they would write $w_\tau$, we write $\Conf^\tau$. Also, we tend to avoid the use of $\lambda$ to signify a partition to avoid conflicts with the theory of pre-$\lambda$ rings. 

A variety over a field $\KK$ is a finite-type scheme over $\KK$. It is quasi-projective if it can be embedded as a locally closed subvariety of $\PP^n_\KK$.  

Our notation for pre-$\lambda$ rings and power structures is described in \cite[Section \ref{MRV1-sec:PreLambda}]{Howe-MRV1}. We highlight the following point here: if $f \in 1+(t_1,t_2,...)R[[t_1, t_2,..]]$, then 
$f^r$ will always denote the naive exponential power series
\[ \exp \left( r\cdot \log f \right) \in   1+(t_1,t_2,...)R_\QQ[[t_1, t_2,..]]. \]
If $R$ is a pre-$\lambda$ ring and we want to denote an exponential taken in the associated power structure, then we write it as
\[ f^{_\Pow r}. \]

Our notation for Grothendieck rings of varieties and motivic measures is explained in \cite[Section \ref{MRV1-sec:GrothendieckRing}]{Howe-MRV1}.

\subsection{Acknowledgements}
We thank Margaret Bilu, Bhargav Bhatt, Weiyan Chen, Matt Emerton, Benson Farb, Melanie Matchett Wood and Jesse Wolfson for helpful conversations. We thank Matt Emerton, Benson Farb, Melanie Matchett Wood, and an anonymous referee for helpful comments on earlier drafts of this paper. We thank the organizers, speakers, assistants, and participants of the 2014 and 2015 Arizona Winter Schools, where we first picked up many of the pieces of this puzzle~--~in particular, we thank Jordan Ellenberg, Bjorn Poonen, and Ravi Vakil for their courses, and Margaret Bilu for first pointing out to us the notion of a motivic Euler product.  

	This paper and its prequel \cite{Howe-MRV1} grew out of an effort to  systematically understand the explicit predictions for the cohomology of local systems on $U_d(\CC)$ that can be obtained from Poonen's sieving results \cite{Poonen-Bertini} (as in Theorem \ref{thm:StabCohomPointCount}). In particular, the results in the complex setting began as conjectures motivated by the finite field case, and it was only after reading Vakil and Wood's paper \cite{VakilWood-Discriminants} that we realized some predictions could be proved by working in the Grothendieck ring of varieties. We would like to acknowledge the intellectual debt this article owes to both \cite{Poonen-Bertini} and \cite{VakilWood-Discriminants}, which will be obvious to the reader familiar with these works. 

\section{Some probability spaces}\label{sec:ProbabilitySpaces}
In this section we define the algebraic probability spaces we use in the rest of the paper. For basic results on algebraic probability theory, we refer to \cite[Section~\ref{MRV1-sec:Probability}]{Howe-MRV1}. For convenience, here we recall that for a ring $R$, an $R$-probability measure $\mu$ on an $R$-algebra $A$ with values in an $R$-algebra $A'$ is given by an expectation map
\[ \EE_\mu : A \rightarrow A' \]
which is a map of $R$-modules sending $1_A$ to $1_{A'}$. Here we think of $A$ as being an algebra of random variables. 

In Subsection \ref{subsec:ProbLisse}, we discuss the spaces used in Section \ref{sec:PointCountingResults} to prove our finite field Theorems \ref{thm:StabCohomPointCount} and \ref{thm:StableGeomPointCount}. The main point is to work in a setting rich enough to handle cohomology groups of algebraic varieties, but small enough so that we can discuss convergence in the archimedean topology on $\QQ$ without too many gymnastics. We reach this middle ground by using the subring of the Grothendieck ring of lisse $l$-adic sheaves consisting of virtual sheaves with everywhere integral characteristic power series of Frobenius. 

In Subsection \ref{subsec:ProbLisse}, we discuss the spaces used to prove Theorem \ref{thm:StabCohomHodge} on the stabilization of the Hodge structures in the cohomology of the variations of Hodge structure $\VanQ^{\pi_{\sigma,d}}$. We note that the standard approach for producing the mixed Hodge structure on these cohomology groups is via Saito's \cite{Saito-MHM1,Saito-MHM2} theory of mixed Hodge modules, however, for our purposes it is simpler to use the geometric theory of Arapura \cite{Arapura-LerayMotivic}. Besides introducing less technical overhead, the main advantage of Arapura's theory for us is that compatibility with Deligne's \cite{Deligne-HodgeII} mixed Hodge structures and the Leray spectral sequence is built-in from the start. We use this compatibility when we deduce the cohomological stabilization in Theorem \ref{thm:StabCohomHodge} from the geometric stabilization in Theorem \ref{thm:StabGeomMot} (cf. Lemma \ref{lem:HodgeMotCommute} for the precise statement used). As indicated in the introduction of \cite{Arapura-LerayMotivic}, the mixed Hodge structures obtained via Saito's theory should agree with those used here. 

In Subsection \ref{subsec:MotRanVar} we discuss the \emph{motivic random variables} used in our motivic stabilization result, Theorem \ref{thm:StabGeomMot} (cf. also \cite[Section~\ref{MRV1-sec:GrothendieckRing}]{Howe-MRV1}). 

\subsection{Lisse $l$-adic sheaves} \label{subsec:ProbLisse}
For a variety $S/\FF_q$, denote by $\Loc_{\QQ_l} S$ the category of lisse $\QQ_l$-sheaves on $S$. The Grothendieck ring 
\[ K_0(\Loc_{\QQ_l}) \]
is a $\lambda$-ring with 
\[ \sigma_k([\Vc]) = [\Sym^k \Vc]. \]
Trace of Frobenius at a closed point $u\in S$ gives a map of rings
\begin{eqnarray*}
K_0(\Loc_{\QQ_l}) & \rightarrow & \QQ_l \\
\;K & \mapsto & \Tr \Frob_u \actsonr K_{\overline u}
\end{eqnarray*}
which sends 
\[ \sigma_t(K) = 1 + \sigma_1(K)t + \sigma_2(K)t^2 + \ldots \]
to the characteristic power series of Frobenius acting on $K_{\overline{u}}$. 
We denote by 
\[ K_0(\Loc_{\QQ_l} S)' \] 
the sub-$\lambda$-ring consisting of elements $K$ such that this character power series is in $\ZZ[[t]]$ for all closed points. By \cite[Theorem 1.6]{Deligne-WeilI} and smooth proper base change, if $f: T \rightarrow S$ is smooth and proper, then for any $i$,
\[ [R^i f_* \QQ_l] \in K_0(\Loc_{\QQ_l} S)'. \]
For any smooth proper $f:T \rightarrow S$, we will denote 
\[ [T]:= [Rf_* \QQ_l] = \sum_i (-1)^i [R^i f_* \QQ_l] \in K_0(\Loc_{\QQ_l} S)'. \]

Pullback via $S \rightarrow \Spec \FF_q$ induces a map of $\lambda$-rings 
\[ K_0(\Loc_{\QQ_l} \FF_q)' \rightarrow  K_0(\Loc_{\QQ_l} S)' \]
with image the constant sheaves. 

In the other direction, there is a map of $K_0(\Loc_{\QQ_l} \FF_q)'$-modules 
\begin{eqnarray*}
K_0(\Loc_{\QQ_l} S)' & \xrightarrow{\chi_S} & K_0(\Loc_{\QQ_l} \FF_q)' \\
\; [\Vc] & \mapsto & \sum_i (-1)^i [ H^i_c (S_{\overline{\FF_q}}, \Vc ) ]. 
\end{eqnarray*}

By the Grothendieck-Lefschetz fixed point formula, the composition of $\chi_S$ with $\Tr \Frob_q$ to $\ZZ$ is given by 
\[ K \mapsto \sum_{u \in S(\FF_q)} \Tr \Frob_q \actsonr K_{\overline{u}} \]
(in particular, this verifies that the image of $\chi_S$ is actually in $K_0(\Loc_{\QQ_l} \FF_q)'$). The image of $\underline{\QQ_l}$ under this composed map is $\#S(\FF_q)$.

In particular, if we consider $\QQ$ as an algebra over $K_0(\Loc_{\QQ_l} \FF_q)'$ via $\Tr \Frob_q$, we obtain a $\QQ$-valued probability measure $\mu$ on $K_0(\Loc_{\QQ_l} / S)'$ via 
\[ \EE_\mu : K \mapsto \frac{\Tr \Frob_q \actsonr \chi_S(K)}{\#S(\FF_q)}. \]

If we fix a $K \in K_0(\Loc_{\QQ_l} S)'$, then we obtain a map of $K_0(\Loc_{\QQ_l} \FF_q)'$ algebras 
\begin{eqnarray*}
 \Lambda_{K_0(\Loc_{\QQ_l} \FF_q)'} & \rightarrow & K_0(\Loc_{\QQ_l} S)' \\
 g & \mapsto & (g, K) 
\end{eqnarray*}
and, by composition with $\EE_\mu$, a $\QQ$-valued $K_0(\Loc_{\QQ_l} \FF_q)'$ measure on $\Lambda_{K_0(\Loc_{\QQ_l} \FF_q)'}.$

\newcommand{\GVHS}{\mathrm{GVSH}}
\newcommand{\GHS}{\mathrm{GHS}}

\subsection{Variations of Hodge structure}\label{subsec:ProbVHS}
Let $S/\CC$ be a smooth variety. Following Arapura \cite{Arapura-LerayMotivic}, we denote by $\GVHS(S)$ the category of \emph{geometric} polarizable variations of $\QQ$-Hodge structure on $S(\CC)$ -- it is the full subcategory of variations of Hodge structures consisting of those isomorphic to a direct summand of $R^i f_* \QQ$ for a smooth projective $f: T\rightarrow S$. 

By \cite{Arapura-LerayMotivic}, Theorem 5.1, for $\Vc \in \GVHS(S)$, $H^i(S, \Vc)$ is equipped with a natural mixed Hodge structure. It is compatible with Leray spectral sequences for smooth proper maps and the Deligne \cite{Deligne-HodgeII} mixed Hodge structures on the cohomology of smooth varieties. Indeed, the results of \cite{Arapura-LerayMotivic} show that the Leray filtration is a filtration by mixed Hodge structures, and the mixed Hodge structure on $H^i(S, \Vc)$ is \emph{defined} to be the induced mixed Hodge structure for any smooth proper $f$ such that $\Vc$ is a direct summand of $R^i f_* \QQ$. 

For $\Vc \in \GVHS(S)$ of weight $w$, we define the Hodge structure on compactly supported cohomology using Poincar\'{e} duality
\[ H^i_c(S(\CC), \Vc):= H^{2n-i}(S, \Vc)^*(-w - \dim S). \]

We denote by
$K_0(\GVHS(S))$ the corresponding Grothendieck ring, which is a $\lambda$-ring with 
\[ \sigma_k([\Vc])=[\Sym^k \Vc]. \]
That $\Sym^k \Vc$ is also geometric for $\Vc$ geometric follows by finding it as a direct summand of the relative cohomology of a product, as in the proof of Corollary 5.5 of \cite{Arapura-LerayMotivic}. 

For $S=\CC$, this is the Grothendieck ring of geometric polarizable $\QQ$-Hodge structures $K_0(\GHS)$. It is a subring of the Grothendieck ring of polarizable Hodge structures $K_0(\HS)$, as defined in \ref{subsec:IntroCohomStab}. 

For any $T/S$ smooth and projective, $R^i f_* T$ is naturally a variation of Hodge structure, and thus gives a class 
\[ [R^i f_* T] \in K_0(\GVHS(S)). \]
In this setting, we denote 
\[ [T/S]_\GVHS := \sum_i (-1)^i [R^i f_* T] \in K_0(\GVHS( S)) \] 
(here the subscript is to distinguish from the class $[T/S]$ in $K_0(\Var S)$). 

Pullback via $S \rightarrow \Spec \CC$ gives a map of $\lambda$-rings 
\[ K_0 (\GHS) \rightarrow K_0(\GVHS(S)) \]
with image the constant geometric variations of Hodge structure on $S$. 

In the other direction, there is a map of $K_0(\GHS)$-modules 
\begin{eqnarray*}
K_0 (\GVHS_\QQ S) & \xrightarrow{\chi_S} & K_0(\HS) \\
\;[\Vc] & \mapsto & \sum_{i} (-1)^i [H^i_c (S(\CC), \Vc)] 
\end{eqnarray*}

\begin{remark}
The map $\chi_S$ factors through $K_0(\GHS) \subset K_0(\HS)$, but we will not make use of this fact. 
\end{remark}

Given a smooth projective $T/S$, we will later want to deduce stabilization results for the variations of Hodge structures constructed from $T$ from motivic stabilization results for $T$. To do so, we need a lemma that says certain pre-$\lambda$ ring structures are compatible:

\begin{lemma}\label{lem:HodgeMotCommute}
For $T/S$ smooth and projective, the following diagram commutes. 
\[
\begin{tikzcd} 
 &  K_0 (\Var / S) \ar[r, "\mathrm{forget}"] & K_0(\Var) \ar[dr, "\chi_\HS"] \\
\Lambda \ar[ur, "{f \mapsto (f, [T/S])}"] \ar[dr,swap, "{f \mapsto (f,[T{/}S]_\GVHS)}"] & & &  K_0(\HS) \\
& K_0(\GVHS(S)) \ar[rru, "\chi_S"]
\end{tikzcd}  
\]
\end{lemma}
\begin{proof}
 
It suffices to verify that for any $l_1,...,l_m$, the two maps from $\Lambda$ to $K_0(\HS)$ agree on $h_{l_1}\cdot...\cdot h_{l_m}$. We denote $n=\sum l_i$, and $S_{\overline{l}} = S_{l_1} \times ... \times S_{l_m}$. 

Via the top arrows, we obtain the cohomology of the quotient
\[ T^{\times_S n} / S_{\overline{l}}. \]
Here the superscript on $T$ denotes the $n$-fold fiber product over $S$. 

This is equal to the $S_{\overline{l}}$-invariants of the cohomology of $T^{\times_S n}$, which is computed by the $S_{\overline{l}}$-invariants of the Leray spectral sequence for 
\[  \pi_n: T^{\times_S n} \rightarrow S. \]

From the $E_2$ page we obtain
\[ \chi_\HS (T^ {\times_S n} / S_{\overline{l}}) = \sum_{i,j} (-1)^{i+j} [H^j_c(S, (R^i \pi_{n*} \QQ)^{S_{\overline{l}}}) ]= \chi_S \left( \sum_i (-1)^i [(R^i \pi_{n *} \QQ) ^{S_{\overline{l}}} ] \right).\]

It suffices then to show
\[ \sum_i (-1)^i [(R^i \pi_{n*} \QQ) ^{S_{\overline{l}}} ] = (h_{l_1}\cdot...\cdot h_{l_m}, [T/S]_\GVHS). \]
 
\newcommand{\bigdsum}{\bigoplus}
We can compute $R^i \pi_{n*} \QQ$ explicitly as an $S_{\overline{l}}$-equivariant variation of Hodge structure using the relative Kunneth formula: Let
\[ V_{n,\textrm{odd}}=\bigdsum_{i \textrm{ odd}} R^i \pi_{n*} \QQ \]
and 
\[ V_{n,\textrm{even}}=\bigdsum_{i \textrm{ even}} R^i \pi_{n*} \QQ. \]
Similarly, for 
\[ \pi: T \rightarrow S, \]
let
 \[ V_{\textrm{odd}}=\bigdsum_{i \textrm{ odd}} R^i \pi_{*} \QQ \]
and 
\[ V_{\textrm{even}}=\bigdsum_{i \textrm{ even}} R^i \pi_{*} \QQ. \]
Then,
\newcommand{\Ind}{\mathrm{Ind}}
\[ V_{n,\textrm{odd}} = \bigdsum_{k_j \leq l_j, \sum k_j \textrm{ odd} } \; \bigboxtimes_{j=1}^m \Ind_{S_{k_j}\times S_{l_j-k_j}}^{S_{l_j}} \left( \left( V_{\textrm{odd}}^{k_j} \otimes \mathrm{sgn} \right) \dsum V_{\textrm{even}}^{l_j-k_j} \right),  \]
and
\[ V_{n,\textrm{even}} = \bigdsum_{k_j \leq l_j, \sum k_j \textrm{ even} } \; \bigboxtimes_{j=1}^m \Ind_{S_{k_j}\times S_{l_j-k_j}}^{S_{l_j}} \left( \left( V_{\textrm{odd}}^{k_j} \otimes \mathrm{sgn} \right) \dsum V_{\textrm{even}}^{l_j-k_j} \right).  \]
Applying Frobenius reciprocity, we find
\[ V_{n,\textrm{odd}}^{S_{\overline{l}}} =  \bigdsum_{k_j \leq l_j, \sum k_j \textrm{odd} } \; \bigotimes_{j=1}^{m} \left( \extp^{k_j} V_{\textrm{odd}} \otimes \Sym^{l_j - k_j} V_{\textrm{even}} \right) ,\]
and 
\[ V_{n,\textrm{even}}^{S_{\overline{l}}} =  \bigdsum_{k_j \leq l_j, \sum k_j \textrm{even} } \; \bigotimes_{j=1}^{m} \left( \extp^{k_j} V_{\textrm{odd}} \otimes \Sym^{l_j - k_j} V_{\textrm{even}} \right).\]
Thus, 
\begin{multline}\label{eqn:OneWayLeray}\sum_i (-1)^i [(R^i \pi_{n*} \QQ) ^{S_{\overline{l}}} ] =  [V_{n,\textrm{even}}^{S_{\overline{l}}}]-[V_{n,\textrm{odd}}^{S_{\overline{l}}}] \\= \prod_{j=1}^{m} \sum_{k_j \leq l_j} (-1)^{k_j} [\extp^{k_j} V_{\textrm{odd}}]\cdot [\Sym^{l_j - k_j} V_{\textrm{even}}]. \end{multline}

We claim that 
\[ \sum_{k_j \leq l_j} (-1)^{k_j} [\extp^{k_j} V_{\textrm{odd}}]\cdot [\Sym^{l_j - k_j} V_{\textrm{even}}] = (h_{l_j}, [T/S]_\GVHS). \]
Given this claim, (\ref{eqn:OneWayLeray}) finishes the proof. To see this claim, we observe
\begin{multline*}
 (h_{l_j}, [T/S]_\GVHS)  = (h_{l_j}, [V_{\textrm{even}}]-[V_{\textrm{odd}}]) \\ = \sum_{k_j \leq l_j} (-1)^{k_j}(h_{l_j-k_j}, [V_{\textrm{even}}])\cdot(e_{k_j},[V_{\textrm{odd}}])   \\
= \sum_{k_j \leq l_j}  (-1)^{k_j} [\Sym^{l_j-k_j}V_{\textrm{even}}] \cdot [\extp^{k_j}V_{\textrm{odd}}].
\end{multline*}
Here the second line comes from identifying $(h_{l_j}, [V_{\textrm{even}}]-[V_{\textrm{odd}}])$ with the coefficient of $t^{l_j}$ in 
\[ \sigma_t([V_{\textrm{even}}]-[V_{\textrm{odd}}])=\sigma_t([V_{\textrm{even}}])\sigma_{t}([V_{\textrm{odd}}])^{-1}, \]
and computing the term $\sigma_{t}([V_{\textrm{odd}}])^{-1}$ using the symmetric function identity
\[ (1+h_1 t + h_2 t^2 + h_3 t^3 + ..)^{-1} = 1 - e_1 t + e_2 t^2 - e_3 t^3 +... \]
to identify the coefficient of $t^k$ with $(-1)^k(e_k, [V_{\textrm{odd}}])=(-1)^k[\extp^k V_{\textrm{odd}}].$ 
\end{proof}

Finally, since 
\[ [S]_\HS=\chi_S ([S/S]_\GVHS)=\chi_S(\underline{\QQ}) \]
is invertible in $\widehat{K_0(\HS)}$, we obtain a $K_0(\GHS)$-probability measure on $K_0(\GVHS(S))$ with values in $\widehat{K_0(\HS)}$ by
\begin{eqnarray*}
 \EE_\mu: K_0(\GVHS(S)) & \rightarrow & K_0(\HS) \\
 K & \mapsto & \frac{\chi_S(K)}{[S]_\HS}. 
\end{eqnarray*}

\subsection{Motivic random variables}\label{subsec:MotRanVar}
If $S/\KK$ is a variety, we consider the relative Grothendieck ring of varieties $K_0(\Var / S)$, which we view as the ring of \emph{motivic random variables} on $S$. It is an algebra over $K_0(\Var)$, the Grothendieck ring of varieties over $\KK$. If 
\[ \phi: K_0(\Var) \rightarrow R \]
is a motivic measure such that $[S]_\phi$ is invertible in $R$, we obtain an $R$-valued $K_0(\Var)$-probability measure on $K_0(\Var / S)$ by 
\[ \EE_\mu [ [Y/S] ] = \frac{[Y]_\phi}{[S]_\phi}. \]
In this article, the most important example of $\phi$ will be the natural map 
\[ K_0(\Var) \rightarrow \motcomp. \]
When $\KK$ is of characteristic zero, the relative Kapranov zeta function gives $K_0(\Var / S)$ the structure of a pre-$\lambda$ ring, and given $Y/S$, we can pull back $\EE_\mu$ via
\[ f \mapsto (f, [Y/S]) \]
to obtain an $R$-valued $\ZZ$-probability measure on $\Lambda$. 

For more details on the relative Grothendieck ring $K_0(\Var /S)$ and the pre-$\lambda$ structure, cf. \cite[Section \ref{MRV1-sec:GrothendieckRing}]{Howe-MRV1}. 

\section{Some representation theory}\label{sec:RepTheory}
In this section we explain how to parameterize irreducible representations of symmetric, orthogonal, and symplectic groups by partitions $\sigma$ in order to produce the representations $\pi_{\sigma,d}$ described in Subsection \ref{subsec:IntroCohomStab}. The key point in common between these groups is that each such group $G$ admits a standard representation $V_\std$ inducing via the pre-$\lambda$ structure a surjection
\begin{eqnarray*} 
\Lambda & \twoheadrightarrow & K_0(\Rep G)\\
f & \mapsto & (f, [V_\std]).
\end{eqnarray*}
In other words, the representation ring is spanned by products of symmetric powers of the standard representation. Through these surjections, families of irreducible representations are naturally parameterized by certain Schur polynomials. For orthogonal and symplectic groups, the results in this section follow from work of Koike and Terada \cite{KoikeTeradaYoungMethods},  and for symmetric groups from work of Marin \cite{Marin-HooksGenerate}.

Let $G$ be a linear algebraic group over a field $F$ of characteristic zero, and consider the category $\Rep G$ of algebraic representations of $G$ on finite dimensional vector spaces over $F$. For us, the most important examples are $F=\QQ$ or $\QQ_l$ and 
\newcommand{\Sp}{\mathrm{Sp}}
\newcommand{\Orth}{\mathrm{O}}
\newcommand{\SO}{\mathrm{SO}}
\begin{equation}\label{eqn:GroupsUsed} G = \begin{cases} S_N \textrm{, a symmetric group } \\ \Sp(V, \langle\;,\;\rangle) \textrm{, automorphisms of a non-degenerate symplectic form} \\
\Orth(V, \langle\;,\;\rangle) \textrm{, automorphisms of a non-degenerate symmetric form} \end{cases}
\end{equation}
In the latter two cases we will also consider the corresponding group of homotheties~$G'$. 

In each of the cases $\bullet=\{S, \Sp, \Orth\}$ of Equation (\ref{eqn:GroupsUsed}), we have a way to assign to a partition $\sigma=(\sigma_1,...,\sigma_k)$ a family of irreducible representations $V_{G,\sigma}$ of the groups in $\bullet$ of sufficient size depending on $\sigma$ (here size means $N$ for symmetric groups and the dimension of $V$ for symplectic and orthogonal groups):
\begin{enumerate}
\item For symmetric groups, we use the method of \cite[Section 2.1]{ChurchFarb-RepStab}. The irreducible representation of $S_N$ attached to $\sigma$ is obtained via the theory of Young tableaux from the partition
\[ (N-|\sigma|, \sigma_1,...,\sigma_k). \]
In particular, it is defined only for $|N| \geq |\sigma| + \sigma_1$. 
\item For symplectic groups (all of which are split over $F$), we use highest weight theory as in \cite[ Section 1]{KoikeTeradaYoungMethods} (see also \cite[Section 2.2]{ChurchFarb-RepStab}). The partition $\sigma$ defines a dominant integral weight of $\Sp_{k+m}$ by padding with $m$ zeros at the end, and from this we obtain an irreducible representation. 
\item For orthogonal groups, there are two complications: first, the group is not connected (the components correspond to $\det=\pm 1$), and second, we will typically be discussing orthogonal groups which are not split over $F$. Nevertheless, after padding the right by zeroes, the partition $\sigma$ will give rise as in \cite[ Section 1]{KoikeTeradaYoungMethods} to a representation of $\SO(V_{\overline{F}})$ obtained as the restriction of an irreducible representation of $\Orth(V_{\overline{F}})$, and we will see below in Remark \ref{rmk:LiftRep} that there is a natural way to choose a representation of $\Orth(V)$ (and even the group of homotheties) restricting to this representation and defined over~$F$. 
\end{enumerate} 

For $G$ as in Equation (\ref{eqn:GroupsUsed}), we denote by $V_\std$ the standard representation -- when $G=S_N$ it is the space of functions on $\{1,...,N \}$ summing to zero\footnote{It is sometimes more natural to take the full permutation representation on $\{1,...,N\}$ as the standard representation in this case.} and when $G$ is the automorphism group of a pairing on $V$ it is given by $V$ itself. 

The Grothendieck ring $K_0(\Rep G)$ is equipped with a pre-$\lambda$ ring structure by
\[ \sigma_k([V])= [\Sym^k V]. \] 
It has the following useful interpretation: if we identify a virtual representation $K$ with its trace in $F[G]^G$ (action by conjugation), then for $f \in \Lambda$, $(f, [V])$ is identified with the function which sends an element $g \in G(\overline{F})$ to the symmetric function $f$ evaluated on the eigenvalues of $g$ acting on $V$. 

\begin{proposition}\label{prop:SurjLambdaSchur}
For $G$ as in Equation (\ref{eqn:GroupsUsed}), the map
\begin{eqnarray*}
\Lambda & \rightarrow & K_0(\Rep G)\\
f & \mapsto & (f, [V_\std]) 
\end{eqnarray*}
is surjective. Furthermore, if we fix one of the three cases $\bullet\in\{S,\Sp, \Orth \}$ of Equation~(\ref{eqn:GroupsUsed}), then for any partition $\sigma$, there is a \emph{Schur polynomial} $s_{\bullet,\sigma}$ such that 
\[ (s_{\bullet,\sigma}, [V_\std]) = V_{\sigma} \]
for any $G$ (of sufficient size) of the form $\bullet$. 
\end{proposition}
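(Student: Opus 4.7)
The plan is to unify all three cases via the character-theoretic description of the pairing $(f,[V_{\std}])$ already noted in the paper: for $g \in G(\overline{F})$, the trace of $(f, [V_{\std}])$ at $g$ equals $f$ evaluated at the multiset of eigenvalues of $g$ acting on $V_{\std}$. Both assertions of the proposition therefore reduce to showing (a) every irreducible character of $G$ can be written as a symmetric polynomial in these eigenvalues, and (b) that this polynomial can be chosen independently of the group size within each family. I would deduce the proposition from two classical structural results: Koike--Terada \cite{KoikeTeradaYoungMethods} for the symplectic and orthogonal cases, and Marin \cite{Marin-HooksGenerate} for the symmetric group case.

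For $G = \Sp(V)$ or $\Orth(V)$ in the split case, this is precisely the content of the theory of \emph{universal characters}. Koike--Terada construct explicit symmetric polynomials $s_{\Sp,\sigma}, s_{\Orth,\sigma} \in \Lambda$ (given by Jacobi--Trudi-type determinants in the complete symmetric functions $h_i$) whose evaluation on the eigenvalues of the standard representation returns the character of the highest-weight-$\sigma$ irreducible for every group in the family of sufficient rank relative to $|\sigma|$. Since the irreducibles $V_{\sigma}$ span $K_0(\Rep G)$, both surjectivity and the existence of a universal $s_{\bullet,\sigma}$ follow at once. The non-split orthogonal case reduces to the split one after base change: the universal character is manifestly a symmetric polynomial with $\ZZ$-coefficients, hence Galois-invariant, so the associated class function on $\Orth(V)$ descends to a representation over $F$; this is the content of Remark \ref{rmk:LiftRep}.

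For $G = S_N$, I would invoke Marin's theorem \cite{Marin-HooksGenerate} that $K_0(\Rep S_N)$ is generated as a ring by the exterior powers of $V_{\std}$. Via the standard Newton-type identity $\sum_i (-1)^i [\wedge^i V_{\std}] \cdot [\Sym^{k-i} V_{\std}] = 0$ for $k \geq 1$, this is equivalent to being generated by the symmetric powers, giving the surjectivity. To identify the universal $s_{S,\sigma} \in \Lambda$ I would apply the Frobenius character formula: the character of $V_{(N-|\sigma|, \sigma_1,\ldots,\sigma_k)}$ at a permutation is a symmetric polynomial in the eigenvalues of the full permutation representation, which one converts to a polynomial in the eigenvalues of $V_{\std}$ by accounting for the single extra eigenvalue equal to $1$. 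Stability of this expression as $N \to \infty$ then produces a well-defined element of $\Lambda$.

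The main obstacle is exactly this last universality step for $S_N$: while Koike--Terada's construction is universal in $\dim V$ by design, for the symmetric group one must verify that the Frobenius-formula expression stabilizes compatibly with the inverse system defining $\Lambda$ as a limit of rings of symmetric polynomials in finitely many variables. This is the only point beyond invoking the cited references that requires a small direct verification, and it is essentially a restatement of the combinatorics underlying the Church--Farb parameterization of $V_{\sigma,N}$ as $N$ varies.
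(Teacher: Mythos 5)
Your overall skeleton is the same as the paper's (Koike--Terada for $\Sp$ and $\Orth$, Marin for $S_N$), but there are two concrete gaps. First, in the orthogonal case your ``both surjectivity and the existence of a universal $s_{\bullet,\sigma}$ follow at once'' hides exactly the subtlety the disconnectedness of $\Orth(V)$ creates: the ring $R(\Orth(m))$ appearing in Koike--Terada is only the \emph{image} of $K_0(\Rep \Orth(m))$ in $K_0(\Rep \SO(m))$, and this restriction map is not injective (distinct irreducibles of $\Orth(m)$ differing by a twist by $\det$ can restrict to the same class). So surjectivity onto that image, which is what the universal-character results give you directly, does not yet yield surjectivity onto $K_0(\Rep \Orth(m))$, nor does it pin down which of the two lifts of a given $\SO$-representation your Schur polynomial produces. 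The missing step is short but necessary: $\det = \Lambda^m V_\std = (e_m,[V_\std])$ lies in the image, and since $f \mapsto (f,[V_\std])$ is a ring map, multiplying by $e_m$ realizes the $\det$-twist and recovers the remaining irreducibles; this is how the paper closes the argument. Your Galois-descent remark for the non-split form is fine in spirit (and matches Remark \ref{rmk:LiftRep}), though as stated ``the class function descends to a representation'' needs the small $K_0$-argument that a virtual $F$-class whose base change is a single irreducible is itself an irreducible $F$-representation.

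Second, for $S_N$ you prove surjectivity for each fixed $N$ (generation by exterior powers, which by the way already lie in the image of $\Lambda$ via $e_k \mapsto [\Lambda^k V_\std]$, so the Newton-identity conversion is unnecessary), but you explicitly defer the existence of a single $s_{S,\sigma} \in \Lambda$ valid for all large $N$ to an unverified ``small direct verification.'' That universality is precisely the second assertion of the proposition, so as written the symmetric-group case is incomplete. Moreover, the Frobenius-formula route you sketch naturally produces a character polynomial in the cycle-counting functions, i.e.\ an element of $\Lambda_\QQ$ (the span of the binomials in the $X_i$), and it is not obvious from that construction that one can choose $s_{S,\sigma}$ in $\Lambda$ itself --- the paper even flags the comparison of these two integral structures as an open point. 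The paper avoids both issues by leaning on the \emph{proof} of Marin's Proposition 4.1, which establishes surjectivity precisely by inductively constructing these universal, integral Schur polynomials from the $e_k$; if you want to keep your route, you must actually carry out the $N$-stability argument and address integrality, rather than assert them.
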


\begin{remark} \label{rmk:LiftRep} In the case of orthogonal groups, we had only defined $V_\sigma$ up to restriction to $\SO(V_{\overline F})$. The orthogonal Schur polynomials of Proposition \ref{prop:SurjLambdaSchur} pick out a lift to 
$\Orth(V_{\overline F})$, defined over $F$. 

	Furthermore, for both symplectic and orthogonal groups, $V_\std$ is naturally a representation of the larger homothety group $G'$ (i.e., the group preserving the pairing up to a scalar multiple instead of on the nose), and thus Proposition \ref{prop:SurjLambdaSchur} picks out a natural lift of $V_\sigma$ to a representation of $G'$ by
\[ [V_\sigma] = (s_{\bullet,\sigma}, [V_\std]) \in K_0(\Rep G'). \]
\end{remark}

\begin{proof}
We may assume $F=\overline{F}$, as for any reductive group $G/F$, the base extension map $K_0(\Rep G) \rightarrow K_0(\Rep G_{\overline{F}})$ is an injection\footnote{We remind the reader of why this holds: the Grothendieck ring is the free abelian group on the isomorphism classes of irreducible representations, so it suffices to show that any two distinct irreducible representations of $G$ over $F$ do not share any irreducible summands after base extension to $\overline{F}$. This is true because, for $V_1$ and $V_2$ any two representations of $G$, \[ \Hom_{G_{\overline{F}}}(V_{1, \overline{F}}, V_{2,\overline{F}}) = \Hom_{G}(V_1,V_2)\otimes_F \overline{F}. \]}.

We first handle the symplectic and orthogonal cases. The surjectivity in the symplectic case then follows from \cite[Proposition 1.2.6]{KoikeTeradaYoungMethods}, noting that the generator $p_i$ there corresponds to 
$[\Sym^i V_\std] =(h_i, [V_\std])$ in our notation. In the orthogonal case, we caution the reader that the ring $R(\Orth(m))$ of \cite{KoikeTeradaYoungMethods} is the \emph{image} of  $K_0(\Rep(\Orth(m)))$ in $K_0(\Rep \SO(m))$. Surjectivity onto this ring follows from the same proposition, noting that the generator $e_i$ there corresponds to 
\[ [\extp^i V_\std] = (e_i, [V_\std]) \]
in our notation. To obtain surjectivity onto $K_0(\Rep(\Orth(m))$, we observe that two irreducible representations of $\Orth(m)$ with equal restriction to $\SO(m)$ differ by tensoring with $\det$, and thus if one is in the image of $\Lambda$, we obtain the other via multiplication by $e_m$. We define the Schur polynomials using the formulas of \cite[Theorems 1.3.2 and 1.3.3]{KoikeTeradaYoungMethods}, together with the observation of \cite[Remark 1.3.4]{KoikeTeradaYoungMethods} that for the dimension large relative to $\sigma$, the formulas are independent of the group.  

For symmetric groups, surjectivity and the existence of Schur polynomials follows from the proof of \cite[Proposition 4.1]{Marin-HooksGenerate}, which proves surjectivity essentially by inductively constructing such Schur polynomials from the elementary symmetric polynomials $e_k$. 
\end{proof}

\begin{remark}
For symmetric groups, we observe that if $X_i$ is the function counting cycles of length $i$, then the character of $\extp^k (V_\std \dsum 1)$ is given by
\[ (-1)^k \cdot \sum_{(\tau_1,...,\tau_l) \textrm{ s.t. } \sum i \tau_i = k} (-1)^{\sum \tau_i} \cdot  \binom{X}{\tau}  \]
where
\[ \binom{X}{\tau} = \prod_i \binom{X_i}{\tau_i}. \]
Combining this with \cite[Section 1.7, Example 14]{Macdonald-SymmetricFunctions}, one can obtain an explicit formula for the Schur polynomials as polynomials in the $X_i$ (which correspond, as in \cite{Howe-MRV1}, to Mobius inverted power sum polynomials $p_i'$ in $\Lambda_\QQ$).  

It would be interesting to compare further the two integral structures on character polynomials occurring naturally here -- the first coming from $\Lambda$, the second coming from the span of the
\[ \binom{X}{\tau}, \]
which give rise to integer valued class functions. 
\end{remark}

\section{Vanishing cohomology}\label{sec:vancohom}
In this section we recall the construction of the local systems $\VanQ$ and $\VanQl$ of vanishing cohomology and establish some of the related notation used in the rest of this paper. 

Let $Y$ be a polarized smooth projective variety of dimension $n$ over a field $\KK$, with polarization denoted by $\Lc$. For any $d \in \ZZ_{\geq1}$, we consider the affine space of global sections of $\Lc^d$, $\AA(\Gamma(Y, \Lc^{\otimes d}))$, and inside of it the Zariski open space of sections with smooth vanishing locus, 
\[  U_{d} \subset  \AA(\Gamma(Y, \Lc^{\otimes d})). \] 

There is a universal smooth projective hypersurface section
\[ f: Z_d \rightarrow U_d \]
fitting into a commutative diagram 
\[ \begin{tikzcd} Z_d \arrow[dr, "f"] \arrow[r, hook] & Y \times U_d \arrow[d]\\
& U_d \end{tikzcd} \] 
such that for $u \in U_d(\overline{\KK})$, the fiber 
\[ Z_{d,u} \hookrightarrow Y \]
is the smooth closed subvariety of $Y_{\overline{\KK}}$ corresponding to $u$. 

\begin{definition}
Let $l$ be coprime to the characteristic of $\KK$. The \emph{\'{e}tale vanishing cohomology} $\VanQl$ is the lisse $\QQ_l$-sheaf on $U_d$ 
\[ \VanQl := \ker R^{n-1} f_* \QQ_l \rightarrow \underline{H^{n+1}(Y_{\overline{\KK}}, \QQ_l)(1)} \]
where the underline denotes the constant sheaf and the map is the relative Gysin map. 
If $\KK=\CC$, the \emph{Betti vanising cohomology} 
$\VanQ$ is the $\QQ$-local system on $U_d(\CC)$ 
\[ \VanQ := \ker R^{n-1} f_{\an*} \QQ \rightarrow \underline{H^{n+1}(Y(\CC), \QQ)(1)} \]
where $f_{\an}$ denotes the analytification of $f$. 
\end{definition}  

\begin{remark}
Geometrically, the Gysin map is given fiberwise by the natural map of homology 
\[ H_{n-1}(Z_u) \rightarrow H_{n-1}(Y) \]
after writing homology as dual to cohomology and then using Poincar\'{e} duality. The kernel is generated by the classes of vanishing spheres in a Lefschetz pencil through $Z_u$, thus the name vanishing cohomology. For more on this geometric interpretation, cf. e.g. \cite[Section 2.3.3]{Voisin-HTII}. 
\end{remark}

\begin{example}
If $Y=\PP^n$, then $U_d$ parameterizes smooth hypersurfaces of degree $d$, and $\VanQl$ (resp $\VanQ$) is the primitive part; in particular, if $n=\dim Y$ is odd then the vanishing cohomology is equal to $R^{n-1}f_{*}\QQ_l$  (resp. $R^{n-1}f_{\an *}\QQ$). The latter holds also for any $Y$ of odd dimension that is a complete intersection in $\PP^m$. 
\end{example}

The local system $R^{n-1} f_* \QQ_l$ (resp $R^{n-1} f_{\an*} \QQ$) is equipped with a non-degenerate pairing to $\QQ_l(-(n-1))$ (resp $\QQ(-(n-1))$) whose restriction to $\VanQl$ (resp $\VanQ$) is also non-degenerate; we denote this restricted pairing by $\langle\;,\;\rangle$. It is symmetric if $n-1$ is even and anti-symmetric if $n-1$ is odd. 

By \cite[Corollaire 4.3.9 ]{Deligne-WeilII} (resp. by the hard Lefschetz theorem over $\CC$), we have
\[ \VanQl^\perp \isoeq \underline{H^{n-1}(Y_{\overline{\KK}}, \QQ_l)} \]
(resp. 
\[ \VanQ^\perp \isoeq \underline{H^{n-1}(Y(\CC), \QQ)}), \]
and there is a direct sum decomposition 
\begin{equation}\label{eqn:VanCohomDSum-ladic}
 R^{n-1} f_* \QQ_l \isoeq \VanQl \oplus \underline{H^{n-1}(Y_{\overline{\KK}}, \QQ_l)} 
\end{equation}
(resp. 
\begin{equation}\label{eqn:VanCohomDSum-Hodge} R^{n-1} f_{\an*} \QQ \isoeq \VanQ \oplus \underline{H^{n-1}(Y(\CC), \QQ)}). 
\end{equation}

If we fix a base point $u_0 \in U(\overline{\KK})$ and a trivialization of $\QQ_l(1)|_{u_0}$, the local system $\VanQl$ (resp. $\VanQ$) is determined by the monodromy representation 
\[ \rho_l: \pi_{1,\et}(U_d, u_0) \rightarrow \Aut'( \Vc_{\van,\QQ_l, u_0}, \langle\;,\;\rangle) \]
(resp. 
\[ \rho: \pi_{1}(U_d(\CC), u_0) \rightarrow \Aut( \Vc_{\van,\QQ, u_0}, \langle\;,\;\rangle ) ). \]
where by $\Aut'$ we denote the group of homotheties of the pairing.

The local system $\VanQl$ (resp. $\VanQ$) or, equivalently, the corresponding monodromy representation $\rho_l$ (resp. $\rho$) is absolutely irreducible. In fact, we can say much more: by Deligne \cite[ Th\'{e}or\`{e}mes 4.4.1 and 4.4.9]{Deligne-WeilII}, the image of $\pi_{1,\et}(U_{d, \overline{K}})$ under $\rho_l$ is either open or finite and equal to the Weyl reflection group of a root system of type $A$, $D$, or $E$ embedded in 
\[ (\Vc_{\van,\QQ_l, u_0}, \langle\;,\;\rangle ) \] 
(the vanishing cycles). The latter can occur only when $n-1$ is even, so that the pairing is symmetric, and, by a result of Katz \cite{Katz-LarsenAlt}, only for small $d$. Open image comes as a consequence of Zariski density, and the argument that the image is either Zariski dense or a reflection group as above given in \cite{Deligne-WeilII} is valid also in the topological setting for $\rho$ (c.f., e.g.,  \cite[Section 3.2]{Voisin-HTII}, for the vanishing cycles input). For the symmetric group case, \cite[2.4.4]{Katz-LarsenAlt}, e.g., shows the monodromy is surjective. 

Let $G_{\QQ_l}$ denote the algebraic group over $\QQ_l$ (resp. over $\QQ$) 
\[ G_{\QQ_l} := \Aut( \Vc_{\van, \QQ_l, u_0}, \langle\;,\;\rangle) \]
(resp. 
\[ G := \Aut( \Vc_{\van, \QQ, u_0}, \langle\;,\;\rangle) ). \]
Denote by $G'_{\QQ_l}$ (resp. $G'$) the corresponding group of homotheties.

Let $\Rep G_{\QQ_l}$ (resp. $\Rep G$) denote the category of algebraic representations of $G_{\QQ_l}$ on $\QQ_l$-vector spaces  (resp. $\QQ$-vector spaces) and by $\Loc_{\QQ_l} U_d$ (resp. $\Loc_\QQ U_d(\CC)$) the category of lisse $\QQ_l$-local systems on $U_d$ (resp. $\QQ$-local systems on $U_d(\CC)$). We obtain a functor 
\begin{eqnarray*} 
\Rep G_{\QQ_l} & \rightarrow & \Loc_{\QQ_l} U_{d, \overline{K}} \\
\pi & \rightarrow & (\VanQl|_{U_{d, \overline{K}}})^\pi
\end{eqnarray*}
where $(\VanQl|_{U_{d, \overline{K}}})^\pi$ is the local system corresponding to the representation 
\[ \pi \circ \rho_l|_{\pi_{1,\et}(U_{d,\overline{K}}, \overline{u_0})}. \]

If $\KK$ does not contain the $l$-power roots of unity, then in general the image of the monodromy representation is contained in $G'_{\QQ_l}$ but not in $G_{\QQ_l}$ (unless $n=0$). Thus we obtain a functor 
\begin{eqnarray*}
\Rep G'_{\QQ_l} & \rightarrow & \Loc_{\QQ_l}{U_d} \\
\pi' & \rightarrow & \VanQl^{\pi'}
\end{eqnarray*}
fitting into a commutative diagram 
\[
\begin{tikzcd} 
\Rep G'_{\QQ_l} \ar[rr, "\pi' \mapsto \VanQl^{\pi'}"]\ar[d, swap, "\pi' \mapsto \pi'|_{G_{\QQ_l}} "] & & \Loc_{\QQ_l} {U_d} \ar[d, "\Vc \mapsto \Vc|_{U_{d,\overline{\KK}}}"] \\
\Rep G_{\QQ_l} \ar[rr,"\pi \mapsto (\VanQl|_{U_{d,\overline{\KK}}})^{\pi} "] & & \Loc_{\QQ_l} {U_{d,\overline{\KK}}}. 
\end{tikzcd}
\]

Similarly, if $\KK=\CC$, then $\VanQ$ is equipped with a natural polarized variation of $\QQ$-Hodge structure. The map from $\Rep G$ to local systems is enriched to a map
\[ \Rep G' \rightarrow \VHS_{U_d(\CC)} \]
where the right-hand side denotes the category of polarizable variations of Hodge structure on $U_d(\CC)$.
It fits into a commutative diagram
\[
\begin{tikzcd}
\Rep G' \ar[rr, "\pi' \mapsto \VanQ^{\pi'}"] \ar[d, swap, "\pi' \mapsto \pi'|_G"] & & \VHS_\QQ U_d(\CC) \ar[d, "\textrm{forget Hodge filtration}"] \\
\Rep G \ar[rr, "\pi \mapsto \VanQ^{\pi}"] & & \Loc_\QQ U_d(\CC). 
\end{tikzcd} 
\]

\section{Point counting results}\label{sec:PointCountingResults}
In this section we prove Theorems \ref{thm:StabCohomPointCount} and \ref{thm:StableGeomPointCount}. As noted in Remark \ref{rmk:PoonenAsympInd}, after setting up the necessary language Theorem \ref{thm:StableGeomPointCount} is a simple consequence of results of Poonen \cite{Poonen-Bertini}. Our main contributions here are the reinterpretation of Poonen's results in the language of asymptotic independence and the pre-$\lambda$ ring structure, and from this the deduction of Theorem \ref{thm:StabCohomPointCount}.

We use the setup of Section \ref{sec:vancohom} with $\KK=\FF_q$. 

\subsection{Geometric stabilization}
In this subsection we prove Theorem \ref{thm:StableGeomPointCount}. 
 
We consider the classical discrete uniform probability measures 
\begin{eqnarray*} \Map(U_d(\FF_q), \QQ) & \xrightarrow{\EE_{\mu_d}} & \QQ \\
f & \mapsto & \frac{1}{\#U_d(\FF_q)} \sum_{u \in U_d(\FF_q)} f(u)
\end{eqnarray*}

Denote by $A(Y)$ the free polynomial ring over $\QQ$ with generators $X_y$ for $y$ running over the closed points of $Y$.

For each $d$, we obtain a map 
\[ A(Y) \rightarrow \Map(U_d(\FF_q), \QQ) \]
given by sending a closed point $X_y$ to the indicator random variable $X_y$ on $U_d(\FF_q)$ defined by
\[ X_y(u) = \begin{cases} 1 \textrm{ if y }\in Z_{d,u}  \\
						0 \textrm{ if y }\not\in Z_{d,u}. \end{cases} \]

By composition, we can think of $\mu_d$ as a sequence of $\QQ$-probability measures on $A(Y)$. 

\begin{theorem}[Poonen]\label{theorem:ClosedPointsInd} The random variables in $A(Y)$ 
\[ \{ X_y \}_{y \in Y \textrm{ a closed point} } \]
are asymptotically independent with asymptotic Bernoulli distributions 
\[ \EE_{\mu_\infty}[(1+t)^{X_y}] = 1 + \frac{q^{n \cdot \deg y} -1}{q^{(n+1) \cdot \deg y}-1} \cdot t. \]
\end{theorem}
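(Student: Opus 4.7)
The plan is to derive the theorem directly from Poonen's Bertini theorem with Taylor conditions \cite[Theorem 1.2]{Poonen-Bertini}. First, I would observe that since each $X_y$ is Bernoulli (taking values in $\{0,1\}$), one has $(1+t)^{X_y} = 1 + tX_y$, so the joint moment generating function $\EE_{\mu_d}[\prod_i (1+t_i)^{X_{y_i}}]$ expands into a linear combination of the probabilities
\[ \Pr{}_{\mu_d}\bigl[\, y_i \in Z_{d,u} \text{ for all } i \in T \,\bigr] \]
as $T$ ranges over subsets of a fixed finite set of closed points $\{y_1,\ldots,y_m\}$ of $Y$. So it suffices to compute the asymptotic behavior of these probabilities.

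Next, I would translate the joint event ``$u \in U_d(\FF_q)$ and $y_i \in Z_{d,u}$ for each $i \in T$'' into a local Taylor-expansion condition on the underlying section $s \in \Gamma(Y, \Lc^d)$: namely, $Z(s)$ is smooth everywhere and, for each $y_i \in T$, the first-order jet of $s$ at $y_i$ lies in the subset $V_{y_i} \subset \Oc_{Y,y_i}/\mathfrak{m}_{y_i}^2$ consisting of jets with value $0$ and nonzero differential (this is exactly the condition that $y_i$ sits smoothly on the zero locus). Counting gives $\# V_{y_i} = q^{n \deg y_i} - 1$ inside $\# \Oc_{Y,y_i}/\mathfrak{m}_{y_i}^2 = q^{(n+1)\deg y_i}$, and $V_{y_i}$ is already contained in the set of ``smooth'' Taylor expansions at $y_i$.

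Applying \cite[Theorem 1.2]{Poonen-Bertini} with these finitely many prescribed local conditions would then express the asymptotic density of such sections inside $\Gamma(Y,\Lc^d)$ as the Euler product
\[ \prod_{i \in T} \frac{q^{n \deg y_i} - 1}{q^{(n+1)\deg y_i}} \,\cdot\, \prod_{y \notin T} \bigl(1 - q^{-(n+1)\deg y}\bigr), \]
where the factor at each $y \notin T$ is just the asymptotic density of smooth 1-jets. Dividing by the density of $U_d$ itself in $\Gamma(Y,\Lc^d)$, which is the full Euler product $\prod_y (1 - q^{-(n+1)\deg y}) = \zeta_Y(n+1)^{-1}$ (Poonen's original theorem), one obtains
\[ \lim_{d \to \infty} \Pr{}_{\mu_d}\bigl[\, y_i \in Z_{d,u}\ \forall i \in T \,\bigr] = \prod_{i \in T} \frac{q^{n \deg y_i} - 1}{q^{(n+1)\deg y_i} - 1}. \]

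The factorization over $i \in T$ is exactly the statement of asymptotic independence of the $X_{y_i}$, and specializing to $|T| = 1$ together with $(1+t)^{X_y} = 1 + tX_y$ gives the claimed asymptotic Bernoulli distribution. The main obstacle is purely formal rather than substantive: one must confirm that Poonen's theorem applies uniformly to our particular local conditions (they are open conditions on $1$-jets at finitely many closed points, which is precisely its intended setting) and justify carefully that the ratio of densities in the ambient linear system computes the conditional probability with respect to the uniform measure on $U_d(\FF_q)$ in the limit. No further input beyond Poonen's sieve is required.
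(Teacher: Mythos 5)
Your proposal is correct and follows essentially the same route as the paper, which attributes the theorem to Poonen and deduces it from the sieving Bertini theorem with Taylor conditions \cite[Theorem 1.2]{Poonen-Bertini}: the joint falling-moment generating function reduces to the probabilities that the smooth hypersurface section passes through a given finite set of closed points, and these are computed by exactly the ratio of densities you describe, giving $\prod_i \bigl(q^{n\deg y_i}-1\bigr)/\bigl(q^{(n+1)\deg y_i}-1\bigr)$. The remaining housekeeping (transferring Poonen's densities from the ambient linear system to $\Gamma(Y,\Lc^{\otimes d})$ and dividing by the nonzero limiting density $\zeta_Y(n+1)^{-1}$ of $U_d$) is precisely what you flag, and it goes through as expected.
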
 

In particular, because the $X_y$ generate $A(Y)$ as a $\QQ$-algebra, the asymptotic measure $\mu_\infty$ is defined on $A(Y)$ and, for any given $a \in A(Y)$, $\EE_{\mu_\infty}[a]$ can be computed explicitly by writing it as a sum of monomials in the $X_y$. 

There are also natural point counting random variables in $\Map(U_d(\FF_q), \QQ)$ coming from the universal family:
\[ X_k (u) = \# \textrm{ closed points of degree } k \textrm{ on } Z_{d,u}. \]
This is the sum of the indicator variables over all of the closed points of a fixed degree, and thus we can consider $X_k$ as an element of $A(Y)$:
\[ X_k := \sum_{y\in Y  \textrm{ closed of degree k}} X_y. \]

The random variables in $\QQ[X_1, X_2,...]$ are, in a precise sense, the random variables coming from the universal family $Z_{d}$: there is a map
\begin{eqnarray*}
K_0(\Loc_{\QQ_l} U_d)' & \rightarrow &\Map(U_d(\FF_q), \QQ) \\
K & \mapsto &  u \mapsto \tr \Frob_q \actsonr K_{\overline{u}}
\end{eqnarray*} 
and the random variable $X_1$ is the image of $[Rf_* \QQ_l]$ under this map. From the $\lambda$-ring structure on $K_0(\Loc_{\QQ_l} U_d)' $ we obtain
\begin{equation}\label{eqn:lambda-to-finite-set} \Lambda_\QQ \rightarrow  \Map(U_d(\FF_q), \QQ) \end{equation}
sending $h_k$ to
\[ u \mapsto \# \Sym^k Z_{d,u}(\FF_q). \]
It is a standard computation that $p_k'$ maps to the random variable attached to $X_k$. Because the $p_k'$ form a polynomial basis for $\Lambda_\QQ$, this lifts to a map 
\begin{eqnarray*}
\Lambda_\QQ & \rightarrow & A(Y)  \\
p_k' & \mapsto & X_k. 
\end{eqnarray*}
which induces an isomorphism between $\Lambda_\QQ$ and $\QQ[X_1, X_2,...]$. 

From Theorem \ref{theorem:ClosedPointsInd}, we deduce that the the $X_k = (p_k', [Z_{d}/U_d])$ are asymptotically independent with asymptotic binomial distributions, the sum over the closed points of degree $k$ of Bernoulli random variables that are 1 with probability 
\[ \frac{q^{(n) \cdot k} -1}{q^{(n+1) \cdot k}-1}. \]
By abuse of notation, if we write also $\mu_d$ for the pullback of $\mu_d$ to $\Lambda_\QQ$ via (\ref{eqn:lambda-to-finite-set}), then the asymptotic binomial distribution can be written as
\[ \lim_{d \rightarrow \infty} \EE_{\mu_d}\left[ (1+t)^{(p_k', [Z_d/U_d])} \right] = \left(1+ \frac{q^{nk} - 1}{q^{(n+1)k}-1} t \right)^{\# \textrm{ closed points of degree $k$ on } Y}, \]
and the asymptotic independence implies
\[ \lim_{d\rightarrow \infty} \EE_{\mu_d}\left [ \prod_k (1+t_k)^{(p_k', [Z_d/U_d]} \right] = \prod_k \lim_{d\rightarrow \infty} \EE_{\mu_d}\left [ (1+t_k)^{(p_k', [Z_d/U_d])} \right]. \]
Thus, we have proven Theorem \ref{thm:StableGeomPointCount}.

\subsection{Cohomological stabilization}
\label{subsec:PCCohomStab}
In this subsection we prove Theorem \ref{thm:StabCohomPointCount}.

By Poonen \cite{Poonen-Bertini},
\[ \lim_{d \rightarrow \infty} \#U_d(\FF_q)/q^{\dim U_d} = \zeta_Y(n+1)^{-1}, \]
and thus we may normalize by $\# U_d(\FF_q)$ instead of $q^{\dim U_d}$. Applying the Grothendieck-Lefschetz fixed point theorem to the numerator, we are then studying
\[ \lim_{d\rightarrow \infty} \frac{1}{\# U_d(\FF_q)} \sum_{u \in U_d(\FF_q)} \Tr \Frob_q \actsonr \Vc_{\van,\QQ_l, \overline{u}}^{\pi_{\tau,d}}\]
where $\overline{u}$ is the $\overline{\FF_q}$ point obtained by composition of $u$ with $\FF_q \hookrightarrow \overline{\FF_q}$.

In particular, if we consider the random variable $X_\tau$ in 
\[ \Map(U_d(\FF_q), \QQ) \]
given by
\[ X_\tau(u) = \Tr \Frob_q \actsonr \Vc_{\van,\QQ_l, \overline{u}}^{\pi_{\tau,d}} \]
then we are studying the asymptotic behavior of 
\[ \EE_{\mu_d}(X_\tau). \]
Our goal now is to deduce the stabilization of this quantity from Theorem \ref{thm:StableGeomPointCount}. 

The $\QQ$-probability measure on $\Lambda_\QQ$ given by
\[ g  \mapsto \EE_{\mu_d}[(g, [Z_d/U_d])] \]
is the same as the measure of the previous section.  It naturally extends to a $K_0(\Loc_{\QQ_l} \FF_q)$-probability measure on $\Lambda_{K_0(\Loc_{\QQ_l} \FF_q)}$ with values in $\QQ$ and by Theorem~\ref{thm:StableGeomPointCount} the measures $\mu_d$ on $\Lambda_{K_0(\Loc_{\QQ_l} \FF_q)}$ converge to a measure $\mu_\infty$. 

Our strategy now is clear: we will construct an element $s_{\tau,Y}$ of $\Lambda_{K_0(\Loc_{\QQ_l} \FF_q)}$ such that for all $d$, 
\[ (s_{\tau,Y}, [Z_d / U_d]) = [\VanQl^{\pi_{\tau,d}}], \]
which implies 
\[ \EE_{\mu_d}[s_{\tau, Y}] = \EE[X_\tau]. \] 

We denote
\begin{multline*}
[Y^\old] := \sum_{i < n-1} (-1)^i \cdot \left( [H^i(Y_{\overline{\FF_q}}, \QQ_l)] +  [H^{2n - i}(Y_{\overline{\FF_q}}, \QQ_l)(1)] \right) \\ 
+ (-1)^{n-1} [H^{n-1}(Y_{\overline{\FF_q}}, \QQ_l)] \in K_0(\Loc_{\QQ_l} \FF_q).
\end{multline*}
By Equation \ref{eqn:VanCohomDSum-ladic} and the weak Lefschetz theorem, it is the constant part of the cohomology of $Z_{d}$: in $K_0(\Loc_{\QQ_l} U_d)$, we have the identity
\[ [Z_d/U_d] = [Y^\old] + [\VanQl]. \]

Recall that $[\VanQl^{\pi_{\tau,d}}] = (s_\tau, [\VanQl])$ for an appropriate Schur polynomial $s_\tau \in \Lambda$ as in Proposition \ref{prop:SurjLambdaSchur}. 

We observe that $(p_k', \underline{\;\;})$ is additive, so that 
\[ (p_k', [\VanQl]) = (p_k', [Z_d]) - (p_k', [Y^\old]) \]
and thus
\[ (p_k', [\VanQl]) = (p_k' - (p_k', [Y^\old]), [Z_d]). \]

In particular, if we let $s_{\tau,Y}$ be the element in $\Lambda_{K_0(\Loc_{\QQ_l} \FF_q)}$ given by expressing $s_\tau$ as a polynomial (with $\QQ$-coefficients) in $p_k'$  and then substituting 
\[ p_{k,Y}' := p_k' - (p_k', [Y^\old]) \]
for $p_k'$, we obtain
\[ \EE_{\mu_d} [ s_{\tau,Y} ] = \frac{1}{\#U_d(\FF_q)} \sum_{u \in U_d(\FF_q)} \Tr \Frob \actsonr \Vc^{\pi_{\tau,d}}_{\van, \QQ_l, \overline{u}} \]

This stabilizes to $\EE_{\mu_\infty}[s_{\tau,Y}]$ as $d \rightarrow \infty$. Moreover, since the $(p_k', [Y^\old])$ are constants and the $p_k'$ are independent for $\mu_\infty$, so are the $p_{k,Y}'$. The asymptotic falling moment generating function for $p_{k,Y}'$ is given by multiplying the asymptotic function for $p_{k}'$ in Theorem \ref{thm:StableGeomPointCount} by the moment generating function for the constant random variable $(p_k', [Y^\old])$ which is 
\[ (1+t)^{-\Tr \Frob \actsonr (p_k', [Y^\old])}=(1+t)^{-\sum_{d|k} \mu(k/d) \Tr \Frob^k \actsonr [Y^\old]}. \]
In particular, the moments of $p_{k,Y}'$ are, for fixed dimension $n$, given by universal formulas that are rational functions of $q$ and symmetric functions  of the eigenvalues of Frobenius acting on the cohomology of $Y$. 

Thus we obtain Theorem \ref{thm:StabCohomPointCount} (where $\tau$ here plays the role of $\sigma$ in the statement of Theorem \ref{thm:StabCohomPointCount}):
 \[ \lim_{d \rightarrow \infty} q^{-\dim U_d}\sum_i (-1)^i \Tr \Frob_q \actsonr H_c^\bullet (U_{d,\overline{\FF_q}}, \VanQl^{\pi_{\tau, d}}) \]
stabilizes to $\zeta_Y(n+1)^{-1}\EE_{\mu_\infty}[s_{\tau,Y}]$, which, for fixed $\tau$ and $n$, is given by an explicit universal formula that is a rational function in $q$ and symmetric functions of the eigenvalues of Frobenius acting on the cohomology of $Y$ (this comes from expressing $s_{\tau,Y}$ as a polynomial in $p_{k,Y}'$ and then using independence to compute its expectation in terms of the moments of $p_{k,Y}'$, which are functions of this form -- cf. Appendix \ref{appendix:AlgAndComp} for more details and an example). 

\section{Hodge theoretic results}\label{sec:BettiResults}
In this section we prove Theorem \ref{thm:StabCohomHodge} and Theorem \ref{thm:StabGeomMot}. We start by generalizing the proof of \cite[Proposition 3.5]{VakilWood-Discriminants} to obtain a weak version of Theorem \ref{thm:StabGeomMot}, Proposition \ref{prop:weakStab} below, which implies the limits in question exist but gives a more complicated formula for their values. To obtain the explicit formulas of Theorem \ref{thm:StabGeomMot}, we construct a generating function for these coefficients using the power structure on the Grothendieck ring of varieties, mirroring the strategy used for configuration spaces in \cite{Howe-MRV1}. The situation is more complicated in this case because in order to make the motivic probabilities $\frac{\bbL^n - 1}{\bbL^{n+1}-1}$ appear after evaluation at a power of $\bbL$, we are forced to consider a power series with non-effective coefficients. However, the geometric description of the power structure given in \cite{GLM-PowerStructure} only applies when the series has effective coefficients. Recent work of Bilu \cite{bilu:thesis, bilu:thesis-article} on motivic Euler products gives a systematic way to compute powers of non-effective series, and combining this with a combinatorial lemma we are able to obtain the desired identity. 

We use the setup of Section \ref{sec:vancohom} with $\KK=\CC$. 

\subsection{Partitions and a combinatorial lemma}\label{subsec:partitions-and-combinatorial-lemma}
Following Vakil-Wood \cite{VakilWood-Discriminants}, we let $\calQ$ denote the set of ordered partitions, i.e. of tuples of integers $(m_1, m_2, \ldots )$ such that for some $k \in \bbN$, $m_i = 0$ for all $i > k$ and $m_i > 0$ for all $i < k$. For $m_1, \ldots, m_k >0$, we will denote by $(m_1, m_2, \ldots, m_k)$ or $1^{m_1} 2^{m_2} \ldots k^{m_k}$ the tuple $(m_1, m_2, \ldots, m_k, 0, 0, \ldots)$. The tuple $(0,0,0, \ldots)$ is in $\calQ$, and we will sometimes write it as $\emptyset$.

Let $\calQ_0 := \dsum_{i \in \bbN} \bbZ_{\geq 0}$, the set of ordered partitions allowing zero, i.e. of tuples of integers $(m_1, m_2, \ldots )$ such that $m_i$ is zero for $i$ sufficiently large. We use similar notation for elements of $\calQ_0$. 

For $\mu = (m_1, m_2, \ldots) \in \calQ_0$, we define $|\mu|=\sum m_i$ and $|| \mu || = \# \{i \; | \; m_i > 0 \}$. We will sometimes write $\mu(i)$ for the $i$th component $m_i$, and $\underline{t}^\mu$ for the monomial $\prod_{i} t_i^{\mu(i)}$ in indeterminates $t_1, t_2, \ldots$. 

\begin{example}
For 
\[ \mu = 1^2 2^1 3^5 = (2, 1, 5) = (2,1,5,0,0\ldots) \in \calQ, \textrm{ and } \]
\[ \tau = 2^1 = (0,1)=(0,1,0,0\ldots) \in \calQ_0 - \calQ, \]
we have
\[ |\mu|=8,\, |\tau|=1,\, ||\mu||=3,\, ||\tau||=1,\, \mu(3)=5,\, \tau(1)=0,\, \underline{t}^\mu=t_1^2 t_2 t_3^5,\, \textrm{ and } \underline{t}^\tau=t_2. \]
\end{example}

There is a natural retraction map $c: \calQ_0 \rightarrow \calQ$ which restricts to the identity on $\calQ$ given by removing intermediate zeroes.

\begin{example}
\[ c(1^2 3^5) = c ( (2,0,5) )= 1^2 2^5 = (2,5). \]
\end{example}

\begin{remark}
A helpful visualization is to consider an element $\mu \in \calQ_0$ as a pile of blocks with columns indexed by $\NN$, and with the $i$th column containing $m_i$ blocks. The elements of $\calQ$ are piles with no gaps between non-empty columns, $|\mu|$ is the total number of blocks in the pile, and $||\mu||$ is the total number of non-empty columns. The contraction $c(\mu)$ is given by sliding all of the non-empty columns as far to the left as possible to close the gaps.  
\end{remark}

We will consider the \emph{decomposition sets}
\[ \calQ_0^{n,*} := \{ (\mu_1, \ldots , \mu_n) \in \calQ_0^n \; | \; \mu_1 + \ldots + \mu_n \in \calQ \}. \]
There is a summation map $\calQ_0^{n,*} \rightarrow \calQ$, $(\mu_1, \ldots, \mu_n) \rightarrow \mu_1 + \ldots + \mu_n$, and a contraction map $c_n: \calQ_0^{n,*} \rightarrow \calQ^{n}$, $(\mu_1, \ldots, \mu_n) \rightarrow (c(\mu_1), \ldots, c(\mu_n)).$  

\begin{remark}\label{remark:piles}
In this remark we give a visual description of the pre-image $c_n^{-1}(\mu_1', \mu_2', \ldots \mu_n')$ and it's behavior under the summation map; it may be helpful to keep this interpretation in mind for Lemma \ref{lemma:key-combinatorics-of-piles} below. 

Given $(\mu_1', \mu_2', \ldots \mu_n') \in \calQ^n$, we can build new partitions in $\calQ$ by starting with the pile $\mu_1'$, then, for each column in $\mu_2'$, either slipping it between two columns or putting it on top of an already existing column, always moving from left to right. After inserting all of the columns from $\mu_2'$, we obtain a new element of $\calQ$, and we may repeat the process to add the columns from $\mu_3'$, etc.; we will end up with a pile $\mu \in \calQ$. The process used to build $\mu$ from $\mu_1',\ldots, \mu_n'$ will determine (and is determined by) a unique decomposition $\mu=\mu_1 + \ldots \mu_n$ with $\mu_i \in \calQ_{0}$ and $c(\mu_i)=\mu_i'$, and thus describes an element of $c_n^{-1}(\mu_1', \mu_2', \ldots \mu_n')$ mapping to $\mu$ under summation.  
\end{remark}

We will use the following lemma in our proof of Theorem \ref{thm:StabGeomMot} below. 
\begin{lemma}\label{lemma:key-combinatorics-of-piles}
Given $(\mu_1', \ldots, \mu_n') \in \calQ^n$, 
\[ \sum_{(\mu_1, \ldots, \mu_n) \in c_n^{-1} (\mu_1', \ldots, \mu_n')}  (-1)^{|| \mu_1 + \ldots + \mu_n ||} = (-1)^{||\mu_1'|| + \ldots + ||\mu_n'||} \]
\end{lemma}

\begin{proof}
The case $n=1$ is evident as $c_1^{-1}((\mu_1'))=(\mu_1')$. We will show the case $n=2$ directly, and then proceed by induction for larger $n$.

We now consider the case $n=2$. To give an element $(\mu_1, \mu_2)$ in $c_2^{-1}(\mu_1', \mu_2')$ is the same as to give:
\begin{enumerate}
\item $j \leq \min(||\mu_1'||, ||\mu_2'||)$
\item A subdivision of $\{1, \ldots, ||\mu_1'|| + ||\mu_2'|| - j\}$ into a subset of size $j$, a subset of size $||\mu_1'|| - j$, and a subset of size $||\mu_2'|| - j$.
\end{enumerate}
Here $j$ corresponds to the number of $k$ such that $\mu_{1}(k)$ and $\mu_2(k)$ are both non-zero; given this information, to determine $\mu_1$ and $\mu_2$ from $\mu_1'$ and $\mu_2'$, it suffices to pick $j$ spots for both to be non-zero, $||\mu'_1|| - j$ for only $\mu_1$ to be non-zero, and $||\mu'_2|| - j$ for only $\mu_2$ to be non-zero. Furthermore, for the resulting $(\mu_1, \mu_2)$, we have 
\[ (-1)^{||\mu_1 + \mu_2||} = (-1)^{||\mu_1'|| + ||\mu_2'||-j} = (-1)^{||\mu_1'|| + ||\mu_2'||} (-1)^j. \]
Writing $||\mu_1||=a$, $||\mu_2||=b$, and assuming $a\leq b$, the desired identity reduces to
\[ \sum_{j=0}^a \binom{b}{j}\binom{a+b-j}{a-j}(-1)^j = 1. \]
This identity is established by plugging in $X=b$, $Y=-b-1$ into the Chu-Vandermonde identity\footnote{We thank an anonymous referee for suggesting this application of the Chu-Vandermonde identity, which gives a shorter proof of the $n=2$ case than the direct combinatorial argument based on Remark \ref{remark:piles} that appeared in an earlier draft.} 
\[ \sum_{j=0}^a \binom{X}{j} \binom{Y}{a-j} = \binom{X+Y}{a},  \]
and simplifying with the identities
\begin{align*} \binom{-b-1}{a-j}&=\frac{(-b-1)(-b-2)\ldots(-b-a+j)}{(a-j)!}=(-1)^{a-j}\binom{a+b-j}{a-j}, \\ 
\textrm{ and } \binom{-1}{a}&=\frac{(-1)(-2)\ldots(-a)}{a!}=(-1)^a. \end{align*}

We now assume $n \geq 3$, and that the identity is known for $m < n$. We can rewrite the sum as
\[ \sum_{(\mu_1, \ldots, \mu_{n-1}) \in c_{n-1}^{-1} (\mu_1', \ldots, \mu_{n-1}')} \sum_{ (\nu, \mu_n) \in c_2^{-1} (\mu_1 + \ldots + \mu_{n-1}, \mu_n') } (-1)^{||\nu + \mu_n||}. \]
Applying the induction hypothesis to the interior sum we find that this is equal to
\begin{multline} \sum_{(\mu_1, \ldots, \mu_{n-1}) \in c_{n-1}^{-1} (\mu_1', \ldots, \mu_{n-1}')} (-1)^{||\mu_1 + \ldots \mu_{n-1}|| + ||\mu_n'||} = \\
(-1)^{||\mu_n'||} \sum_{(\mu_1, \ldots, \mu_{n-1}) \in c_{n-1}^{-1} (\mu_1', \ldots, \mu_{n-1}')} (-1)^{||\mu_1 + \ldots \mu_{n-1}||}, 
\end{multline}
and we conclude by applying the induction hypothesis to the remaining sum. 
\end{proof}

\subsection{A first stabilization formula for configuration spaces}
\newcommand{\bc}{\mathbf{c}}

The proof of the following proposition follows the same strategy as \cite[Proposition 3.5]{VakilWood-Discriminants}, which corresponds to $\tau=\emptyset$.  

\begin{proposition}\label{prop:weakStab}
For $\tau \in \calQ_{0},$ the following identity holds in $\motcomp$: 
\begin{multline}\label{eqn:ConfLimitFormula}
 \lim_{d\rightarrow \infty} \frac{[\Conf^\tau_{U_d} Z_d]}{\LL^{\dim U_d}} = \\
 \sum_{\mu \in \calQ} (-1)^{||\mu||} \sum_{\substack{\tau_1 + \tau_2 = \tau,\;\;  \sum \mu_i + \mu' = \mu, \\ |\mu_i|=\tau_2(i),\;\; \tau_1, \tau_2, \mu_i, \mu' \in \calQ_{0}}}\LL^{-|\tau_1| -|\mu|(n+1)} [\Conf^{\tau_1 \cdot \mu_1 \cdot \mu_2 \cdot \ldots \cdot \mu'} Y]. 
 \end{multline}
\end{proposition}
\begin{proof}

Let
\[ V_d:= \AA(\Gamma(Y, \Oc(d))). \]
Given partitions $\tau$ and $\mu$ we denote by $W^\tau_{\geq \mu}$ the closed subvariety of 
\[ V_d \times \Conf^\tau Y \times \Conf^\mu Y \]
consisting of $(s, \bc_{\tau}, \bc_{\mu} )$ such that $s$ vanishes at the points in $\bc_{\tau}$ and such that $s$ is singular at the points in $\bc_{\mu}$. We denote by $W^\tau_{\mu}$ the constructible subset such that $s$ vanishes at the points in $\bc_{\tau}$ and is singular at \emph{exactly} the points in $\bc_{\mu}$. In particular, we have
\[ W^\tau_{\emptyset} = \Conf^\tau_{U_d} Z_d. \]

We claim that, for any $k \in \ZZ_{\geq 1}$,
\begin{equation}\label{eqn:WgeqExpresion}
[W^\tau_{\geq \mu}] = [W^\tau_{\mu}] + [W^\tau_{\mu\cdot *}] + [W^\tau_{\mu \cdot *^2}] + ... + [W^\tau_{\mu \cdot *^{k-1}}] + [(W^\tau_{\geq \mu})_k ]. 
\end{equation}
where $(W^\tau_{\geq \mu})_k$ denotes the image of $W^\tau_{\geq \mu \cdot *^k}$ in $W^\tau_{\geq \mu}$ under the natural map 
\[ \pi_k: W^\tau_{\geq \mu \cdot *^k} \rightarrow W^\tau_{\geq \mu}. \]
Equation \ref{eqn:WgeqExpresion} holds because for any $k$, we have an equality in the $K_0(\Var/\CC)$
\[ [W^\tau_{\mu \cdot *^k}] = [\pi_k(W^\tau_{\mu \cdot *^k})]. \]
Here $\pi_k(W^\tau_{\mu \cdot *^k})$ is the constructible subset of $W^\tau_{\geq \mu}$ where the section is singular at exactly $k$ points outside of those marked by $\mu$. 
 
We may decompose $\Conf^\tau Y \times \Conf^\mu Y$, and thus, $W^\tau_{\geq \mu}$, into locally closed sets determined by the overlap of the configurations $\bc_{\tau}$ and $\bc_{\mu}$. We enumerate the possibilities for the overlap: first we must choose an ordered decomposition $\tau=\tau_1 + \tau_2$; the partition $\tau_1$ marks the points which are in $\bc_{\tau} - \bc_{\mu}$, and the partition $\tau_2$ marks the points which are in $\bc_{\tau} \cap \bc_\mu$.  Then, writing $\tau_2=(k_1, \ldots, k_l)$, we must choose an ordered decomposition 
\[ \mu = \mu_1 + \ldots \mu_l + \mu' \]
where $|\mu_i|=k_i$; the partition $\mu_i$ corresponds to the points in $\bc_{\tau} \cap \bc_\mu$ and where the marking from $\tau$ is with label $a_i$, and the partition $\mu'$ corresponds to the points in $\bc_{\mu} - \bc_\tau$. We denote the corresponding subvariety of $\Conf^\tau Y \times \Conf^\mu Y$ by $w_{\tau_\bullet, \mu_\bullet}$, and its preimage in $W^\tau_{\geq_\mu}$ by $W^{\tau_\bullet}_{\geq\mu, \mu_\bullet}$. The natural map
\begin{equation}\label{eqn:conf-iso-overlap} \Conf^{\tau_1 \cdot \mu_1 \cdot \mu_2 \cdot \ldots \cdot \mu'} Y \rightarrow \Conf^{\tau} Y \times \Conf^\mu Y \end{equation}
induces an isomorphism between $\Conf^{\tau_1 \cdot \mu_1 \cdot \mu_2 \cdot \ldots \cdot \mu'} Y$ and its scheme theoretic image $w_{\tau_\bullet, \mu_\bullet}$. 

\begin{example}
If $\tau=a^1 b^1$, and $\mu=1^3$, then, we might choose 
\[ \tau_1 = a^1 , \tau_2=b^1, \mu_1=\emptyset, \mu_2= 1^1, \mu'=1^2. \]
For this choice, $W^{\tau_\bullet}_{\geq \mu, \mu_\bullet}$ consists of hypersurface sections with a non-singular point marked by the label $a$, a singular point marked by the labels $a$ (from $\tau$) and $1$ (from $\mu$), and a pair of singular points marked only by the label $1$ from $\mu$.
\end{example}

We denote by $\Fil^\bullet$ the decreasing dimension filtration on $K_0(\Var)[\LL^{-1}]$, so that $\Fil^k$ is spanned by classes of ``dimension $-k$", i.e. of the form $[X]/\LL^m$ where $\dim X \leq m-k$. We also denote by $\Fil^\bullet$ the induced filtration on $\motcomp$.  

Using the decomposition of $W_{\geq \mu}^\tau$ above, we obtain:

\begin{lemma} \label{lem:VecBundleParts}
Fix a partition $\tau$, and a positive integer $m$. Then, there exists an $N>0$ such that for all $d>N$, and $|\mu|\leq m$,
\begin{equation} \label{eqn:VecBundleParts} \frac{[W^\tau_{\geq \mu}]}{\bbL^{\dim U_d}} = \sum_{\substack{\tau_1 + \tau_2 = \tau,\;\;  \sum \mu_i + \mu' = \mu, \\ |\mu_i|=\tau_2(i),\;\; \tau_1, \tau_2, \mu_i, \mu' \in \calQ_{0}}}\LL^{-|\tau_1| -|\mu|(n+1)} [\Conf^{\tau_1 \cdot \mu_1 \cdot \mu_2 \cdot \ldots \cdot \mu'} Y] 
\end{equation}
in $K_0(\Var)[\bbL^{-1}]$, and for all $\mu, k$ such that $|\mu| + k = m$, 
\[ [(W^\tau_{\geq \mu})_k] / \LL^{\dim U_d} \equiv 0 \mod \Fil^{ -(n-1) \cdot |\tau| + m} \] 
\end{lemma}
\begin{proof}[Proof of lemma]
As above, we can write
\[ W^{\tau}_{\geq \mu}  = \bigsqcup_{\substack{\tau_1 + \tau_2 = \tau,\;\;  \sum \mu_i + \mu' = \mu, \\ |\mu_i|=\tau_2(i),\;\; \tau_1, \tau_2, \mu_i, \mu' \in \calQ_{ 0}}} W^{\tau_\bullet}_{\geq\mu, \mu_\bullet}. \]

Arguing as in \cite[Lemma 3.2]{VakilWood-Discriminants}, we find that for $d$ sufficiently large, $W^{\tau_\bullet}_{\geq \mu, \mu_\bullet}$ is a vector bundle of rank $\dim V_d - |\tau_1| - |\mu|(n+1)$ over $w_{\tau_\bullet, \mu_\bullet}$: here the quantity $|\tau_1| + |\mu|(n+1)$ being subtracted comes from the fact that marking a point as being on a hypersurface section imposes one linear condition on $V_d$ and marking a point as singular imposes $n+1$ linear conditions on $V_d$.  

Thus, using the isomorphism induced by (\ref{eqn:conf-iso-overlap}) between $\Conf^{\tau_1 \cdot \mu_1 \cdot \mu_2 \cdot \ldots \cdot \mu'} Y$ and  $w_{\tau_\bullet, \mu_\bullet}$,  we find
\[ [W^{\tau_\bullet}_{\geq \mu, \mu_\bullet}] = \LL^{\dim{V_d} - |\tau_1| - |\mu|(n+1)} \cdot [\Conf^{\tau_1 \cdot \mu_1 \cdot \mu_2 \cdot \ldots \cdot \mu'} Y] . \]
Since $\dim U_d = \dim V_d$, we obtain the equality~(\ref{eqn:VecBundleParts}) by summing these expressions for $[W^{\tau_\bullet}_{\geq \mu, \mu_\bullet}]$ and dividing by $\LL^{\dim U_d}$. 

To prove the final statement of the lemma, we note that for any choice of $\tau_\bullet$ and $\mu_\bullet$ as above 
\begin{multline*} - |\tau_1| - |\mu|(n+1) + \dim \Conf^{\tau_1 \cdot \mu_1 \cdot \mu_2 \cdot \ldots \cdot \mu'} Y = \\  -|\tau_1| - |\mu|(n+1) + (|\tau_1| + |\mu|)(n) = |\tau_1|(n-1) - |\mu| \end{multline*}
and thus $\dim W_{\geq \mu}^\tau - \dim U_d \leq |\tau|(n-1) - |\mu|$. We conclude the final statement by replacing $\mu$ with  $\mu \cdot *^k$ and using the corresponding dimension bound on $\pi_k( W_{\geq \mu \cdot *^k}^\tau)$.  
\end{proof}

We can now conclude the proof of Proposition \ref{prop:weakStab}. Fix a $m$, and let $N$ be large enough such that Lemma \ref{lem:VecBundleParts} holds for our fixed $\tau$ and $m$. Let $d \geq N$.

Using Equation \ref{eqn:WgeqExpresion} iteratively to replace terms of the form $[W^\tau_\mu]$ with terms of the form $[W^\tau_{\geq \mu}]$, we find that, 
\begin{multline}\label{eqn:expansion-with-error-term}
\frac{[\Conf^\tau_{U_d} Z_d / U_d ]}{\LL^{\dim U_d}}  = \LL^{- \dim U_d} \left( [W_{\geq \emptyset}^\tau] - [W_{ *}^\tau] - ... - [W_{ *^{m-1}}^\tau] - [(W_{\geq \emptyset}^\tau)_{m}] \right) \\
= \LL^{- \dim U_d}  ( ([W_{\geq \emptyset}^\tau]) - ([W_{\geq *}^\tau] - [W_{*\cdot\star}^\tau] - ... - [W_{*\cdot\star^{m-2}}^\tau] - [(W_{\geq*}^\tau)_{m-1}]) \\ - ([W_{\geq *^2}^\tau] - [W_{*^2\cdot\star}^\tau] - ... - [W_{*^2\cdot\star^{m-3}}^\tau] - [(W_{\geq*^2}^\tau)_{m-2}])   - ... - [(W^\tau_{ \geq \emptyset})_{m}]) \\
= ... = \LL^{-\dim U_d} \left( \sum_{ \mu \in \calQ, |\mu|< m} (-1)^{||\mu||} \cdot \left( [W^\tau_{\geq \mu}] - [(W^\tau_{\geq {\mu}})_{m - |\mu|}]\right) \right).
\end{multline}
The second part of Lemma \ref{lem:VecBundleParts} implies that 
\[ \frac{[(W^\tau_{\geq \mu})_{m-|\mu|}]}{\LL^{\dim U_d}} \equiv 0 \mod  \Fil^{-(n-1) \cdot |\tau| + m}. \]
Thus, (\ref{eqn:expansion-with-error-term}) gives
\[ \frac{[\Conf^\tau_{U_d} Z_d / U_d ]}{\LL^{\dim U_d}}  = \LL^{-\dim U_d} \sum_{ \mu \in \calQ, |\mu| < m} (-1)^{||\mu||} \cdot [W^\tau_{\geq \mu}] \textrm{ mod } \Fil^{-(n-1) \cdot |\tau| + m}\]

We obtain the formula (\ref{eqn:ConfLimitFormula}) for the limit by plugging in the first part of Lemma \ref{lem:VecBundleParts} for each term of the sum and then taking $m \rightarrow \infty$ (and thus, also $d\rightarrow \infty$, as  the identity holds only for $d$ sufficiently large, depending on $m$). 

\end{proof}

\subsection{A generating function and the proof of Theorem \ref{thm:StabGeomMot}}
Recall from \cite{GLM-PowerStructure} (cf. also \cite{Howe-MRV1}) that there is a power structure on the Grothendieck ring of varieties, which gives a way to make sense of expressions of the form $f^a$
for $f$ a generalized power series with constant term $1$ and $a \in K_0(\Var/\CC)$ (or more generally $K_0(\Var / X)$ for a variety $X/\CC$) that satisfies many of the formal properties one would expect from the notation. The power structure gives a useful way to organize computations in the Grothendieck ring with generalized configuration spaces, as demonstrated, e.g., in the prequel \cite{Howe-MRV1}, and we will use it again here in a similar way. To avoid confusion with the naive exponential, we use the notation of \cite{Howe-MRV1} and write $f^{_\Pow a}$ to denote exponentials in the power structure. In the next section, we will show:
\begin{proposition}\label{prop:generating-identity} For any variety $X / \mathbb{C} $
\begin{multline}
\label{eqn:pro-gen-func}
(1 - s\cdot(1+ t_1 + t_2 +...) + z \cdot (t_1 + t_2 + ...))^{_\Pow [X]} = \\
\sum_{\tau \in \calQ_{0} } \sum_{\mu \in Q} (-1)^{||\mu||} \sum_{\substack{\tau_1 + \tau_2 = \tau,\;\;  \sum \mu_i + \mu' = \mu, \\ |\mu_i|=\tau_2(i),\;\; \tau_1, \tau_2, \mu_i, \mu' \in \calQ_{ 0}}} [\Conf^{\tau_1 \cdot \mu_1 \cdot \ldots \cdot \mu_m \cdot \mu'} X] s^{|\mu|} z^{|\tau_1|} \underline{t}^\tau 
\end{multline}
\end{proposition}

Assuming this proposition, we prove Theorem \ref{thm:StabGeomMot}: 
\begin{proof}[Proof of Theorem \ref{thm:StabGeomMot}]

The elements $c_\tau \in \Lambda$ described in Example \ref{example:RelConfStab} form a basis for $\Lambda$ as a $\ZZ$-module (cf. \cite[Subsection \ref{MRV1-subsec:GrothPow}]{Howe-MRV1}). Thus, to show that for any $f \in \Lambda$,
\[ \lim_{d\rightarrow \infty} \EE_{\mu_d}[(f,[Z_d/U_d])] \] 
exists in $\motcomp$, it suffices to verify that for any $\tau$,
\begin{equation*}
\lim_{d\rightarrow \infty} \EE_{\mu_d}[(c_\tau, [Z_d / U_d])] 
\end{equation*}
exists in $\motcomp$. Because 
\[ (c_\tau, [Z_d/U_d]) = [\Conf^\tau_{U_d} Z_d / U_d], \]
we must verify
\begin{equation}\label{eqn:ConfSpaceLimit}
\lim_{d\rightarrow \infty} \frac{[\Conf^\tau_{U_d} Z_d]}{[U_d]} 
\end{equation}
exists in $\motcomp$. We recall (cf. Remark \ref{rmk:VWMotZeta}) that, by \cite{VakilWood-Discriminants}, 
\begin{equation}\label{eqn:VWLimDisc}
\lim_{d\rightarrow \infty} \frac{[U_d]}{\LL^{\dim U_d}} = Z_Y\left(\LL^{-(n+1)}\right)^{-1} \in \motcomp. 
\end{equation}
Thus, combining with Proposition \ref{prop:weakStab} we see the limit exists and moreover we obtain the formula
\begin{multline} \label{eqn:ExpectationLimitFormula} \lim_{d\rightarrow \infty} \EE_{\mu_d}[(c_\tau, [Z_d / U_d])] =  Z_Y\left(\LL^{-(n+1)}\right) \cdot \\ \sum_{\mu \in \calQ} (-1)^{||\mu||} \sum_{\substack{\tau_1 + \tau_2 = \tau,\;\;  \sum \mu_i + \mu' = \mu, \\ |\mu_i|=\tau_2(i),\;\; \tau_1, \tau_2, \mu_i, \mu' \in \calQ_{ 0}}}\LL^{-|\tau_1| -|\mu|(n+1)} [\Conf^{\tau_1 \cdot \mu_1 \cdot \mu_2 \cdot \ldots \cdot \mu'} Y]. \end{multline}

Now, since the $c_\tau$ form a basis for $\Lambda_\QQ$, to verify the asymptotic independence and asymptotic distribution of the random variables $(p_k', [Z_d/U_d])$, it suffices to show that for each $\tau$, 
\[ \lim_{d\rightarrow \infty} \EE_{\mu_d}[(c_\tau, [Z_d / U_d])] \]
is equal to the value predicted by the asymptotic independence and distribution (cf. \cite[Section 5.3]{Howe-MRV1}). Concretely, we have 
\begin{align*} \sum_{\tau \in \calQ_{0}} \underline{t}^\tau \lim_{d\rightarrow \infty} \EE_{\mu_d}[(c_\tau, [Z_d/U_d])] & = \lim_{d \rightarrow \infty} \EE_{\mu_d}\left[ (1 + t_1 + t_2 + \ldots)^{_\Pow [Z_d / U_d]} \right] \\ & = \lim_{d\rightarrow \infty}\EE_{\mu_d}\left[ \prod_k (1 + t_1^k + t_2^k + \ldots)^{(p_k', [Z_d / U_d])}\right], 
\end{align*}
where the second equality follows from \cite[Lemma \ref{MRV1-lem:EulerCoeffs}]{Howe-MRV1}, 
and thus it suffices to show that 
\[  \sum_{\tau \in \calQ_{0}} \underline{t}^\tau \lim_{d\rightarrow \infty} \EE_{\mu_d}[(c_\tau, [Z_d/U_d])] = \prod_k \left(1 + \frac{\bbL^{nk}-1}{\bbL^{(n+1)k}-1}(t_1^k + t_2^k + ...)\right)^{(p_k', [Y])}. \]

We will arrive at this identity by manipulating the equation of Proposition \ref{prop:generating-identity}. First, we may divide both sides of the equation in Proposition \ref{prop:generating-identity} by $(1-s)^{_\Pow [Y]}$ to obtain
\begin{multline}
\label{eqn:divided-gen-function-identity}
\left(1 + \frac{z-s}{1-s}(t_1 + t_2 + ...)\right)^{_\Pow [Y]} = (1-s)^{_\Pow -[Y]} \cdot \\  \\
\sum_{\tau \in \calQ_{0} } \sum_{\mu \in Q} (-1)^{||\mu||} \sum_{\substack{\tau_1 + \tau_2 = \tau,\;\;  \sum \mu_i + \mu' = \mu, \\ |\mu_i|=\tau_2(i),\;\; \tau_1, \tau_2, \mu_i, \mu' \in \calQ_{ 0}}} [\Conf^{\tau_1 \cdot \mu_1 \cdot \ldots \cdot \mu_m \cdot \mu'} Y] s^{|\mu|} z^{|\tau_1|} \underline{t}^\tau 
\end{multline}

We have $(1-s)^{_\Pow -[Y]}=Z_Y(s)$. Thus, plugging in 
\[ s=\LL^{-(n+1)},\; z= \LL^{-1} \] and comparing with Equations (\ref{eqn:ConfLimitFormula}) and (\ref{eqn:VWLimDisc}), we find that after evaluation the coefficient of $\underline{t}^\tau$ on the right-hand side of (\ref{eqn:divided-gen-function-identity}) is equal to
\[ \lim_{d\rightarrow \infty} \EE_{\mu_d}[(c_\tau, [Z_d/U_d])]. \]
On the other hand, using \cite[Lemma \ref{MRV1-lem:EulerCoeffs}]{Howe-MRV1}, the left-hand side of (\ref{eqn:divided-gen-function-identity}) has a naive Euler product expansion as 
\[ \prod_k \left(1 + \frac{z^k-s^k}{1-s^k}(t_1 + t_2 + ...)\right)^{(p_k', [Y])}. \]
Thus, plugging in 
\[ s=\LL^{-(n+1)},\; z= \LL^{-1} \] 
and simplifying we conclude 
\[ \sum_{\tau \in \calQ_{0}} \underline{t}^\tau \lim_{d\rightarrow \infty} \EE_{\mu_d}[(c_\tau, [Z_d/U_d])] = \prod_k \left(1 + \frac{\bbL^{nk}-1}{\bbL^{(n+1)k}-1}(t_1^k + t_2^k + ...)\right)^{(p_k', [Y])} \]
in $\motcomp \otimes \QQ$, as desired. 

\end{proof}

\subsection{Powers of non-effective series}
\newcommand{\bbC}{\mathbb{C}}
In this section we explain how the results of Bilu \cite{bilu:thesis, bilu:thesis-article} on motivic Euler products can be used to compute powers of non-effective series, and in particular obtain Proposition \ref{prop:generating-identity}. 

Given a variety $X/\bbC$ and elements $a_\tau \in K_0(\Var/ X)$ for $\tau \in \calQ_{0}\backslash\{0\}$, the theory of motivic Euler products defines a power series with constant term 1
\[ \prod_{x \in X} \left(1 + \sum_{\tau \in \calQ_0} a_\tau t^\tau \right) \in K_0(\Var/\bbC)[[t_1, t_2, \ldots]]. \]
As with power structures, the infinite product notation gives an indication of which manipulations are valid with these products -- cf. \cite[3.8-3.10]{bilu:thesis-article} for a precise description of the properties satisfied. Most importantly for us, motivic Euler products are compatible with the power structure: if the classes $a_\tau \in K_0(\Var/X)$ are given by pulling back classes $b_\tau$ in $K_0(\Var/\bbC)$ (i.e., the $a_\tau$ are constant), then 
\begin{equation}\label{eqn:mot-euler-mot-power} \prod_{x \in X} \left(1 + \sum_{\tau \in \calQ_0 \backslash \{0\}} a_\tau \underline{t}^\tau \right) = \left( 1 + \sum_{\tau \in \calQ_0 \backslash \{0\}} b_\tau \underline{t}^\tau \right)^{_\Pow [X]}. \end{equation}
Indeed, when the $b_\tau$ are effective, both sides are immediately seen to agree by comparing the geometric description given in \cite{GLM-PowerStructure} of the coefficients of the power on the right-hand side in terms of generalized configuration spaces with the definition of the motivic Euler product \cite[3.8.1]{bilu:thesis-article} (which is also in terms of generalized configuration spaces when the coefficients are effective). As an arbitrary series can be multiplied by an effective series to obtain another effective series, we conclude using multiplicativity of motivic Euler products\footnote{In \cite{bilu:thesis}, multiplicativity is shown only for effective power series, but the general case appears in the updated version \cite{bilu:thesis-article}.} and multiplicativity of the power structure that both sides always agree.

We will make use of (\ref{eqn:mot-euler-mot-power}) and the definition of motivic Euler products to give a general expression for motivic powers in Theorem \ref{thm:non-effective-power} below, which we will then use to establish Proposition \ref{prop:generating-identity}. Before stating Theorem \ref{thm:non-effective-power}, we recall some notation from \cite{bilu:thesis,bilu:thesis-article} used in the definition of motivic Euler products. 

Given $X/\bbC$, we consider the complete graded algebra 
\[ K_0(\Var / \Sym^\bullet X) := \prod_{k \geq 0} K_0(\Var / \Sym^k X) \]
with termwise addition, and multiplication induced on graded components by composition of the exterior multiplications
\[ \times: K_0(\Var / \Sym^{k_1} X) \times K_0(\Var/ \Sym^{k_2}) \rightarrow K_0(\Var / \Sym^{k_1} X \times \Sym^{k_2} X) \]
with the forgetful maps
\[ K_0(\Var / \Sym^{k_1} X \times \Sym^{k_2} X) \rightarrow K_0(\Var / \Sym^{k_1 + k_2} X ). \]
Denote by $K_0(\Var / \Sym^\bullet X)^1 \subset K_0(\Var / \Sym^\bullet X)$ the subgroup of the multiplicative units whose constant ($k=0$) term is $1 \in K_0(\Var/ \bbC)$. 

\begin{lemma} [Lemma 3.5.1.2 of \cite{bilu:thesis-article}] There is a unique group homomorphism 
\[ S: K_0(\Var / X) \rightarrow K_0(\Var / \Sym^\bullet X)^1 \]
such that for a variety $Y/X$, 
\[ S([Y/X])=(1, [Y/X], [\Sym^2 Y / \Sym^2 X], \ldots ). \]
\end{lemma}

We introduce some related notation: given $a \in K_0(\Var/X)$, we write $S^n a \in K_0(\Var/\Sym^n X)$ for the $n$th component of $S(a)$. Given an indexing set $I$ and, for each $i\in I$, $n_i \in \ZZ_{\geq 0}$ and $a_i \in K_0(\Var /X)$ such that $n_i =0$ for all but finitely many $i$, we write 
\[ \prod S^{n_i} a_i \in K_0(\Var / \Sym^{\sum n_i} X) \]
for the image of the obvious exterior product under the forgetful map
\[ K_0(\Var / \prod \Sym^{n_i} X_i) \rightarrow K_0(\Var / \Sym^{\sum n_i} X_i). \]
Finally, given $a \in K_0(\Var / \Sym^n X)$, we write $a^*$ for the image of $a$ under the composition of the restriction map
\[ K_0(\Var / \Sym^n X) \rightarrow K_0(\Var / \Conf^n X) \]
and the forgetful map
\[ K_0(\Var / \Conf^n X ) \rightarrow K_0(\Var / \CC). \]
The definition of the motivic Euler product \cite[3.8.1]{bilu:thesis-article}, combined with the identity (\ref{eqn:mot-euler-mot-power}) then gives:

\begin{theorem}  \label{thm:non-effective-power} 
If $b_\tau \in K_0(\Var/\CC),\;\tau \in \calQ_0$ is a collection of classes with $b_\emptyset=1$, and $a_\tau$ is the pullback of $b_\tau$ to $K_0(\Var/X)$, then
\[ \left( \sum_{\tau \in \calQ_0}  b_\tau \underline{t}^\tau \right)^{_\Pow [X]} = \sum_{\tau \in \calQ_0} \; \sum_{\substack{\tau = \sum n_{\tau'} \tau'\\\tau' \in \calQ_0,\; n_{\tau'} \in \bbZ_{\geq 0}}}  \left(\prod S^{n_{\tau'}} a_{\tau'}\right)^* \underline{t}^\tau.  \]
\end{theorem}
Theorem \ref{thm:non-effective-power} provides a systematic way to compute powers even when the base series is not effective. Using it, we prove Proposition \ref{prop:generating-identity}:

\begin{proof}[Proof of Proposition \ref{prop:generating-identity}]
We first observe that, for any $X$,
\begin{equation}\label{eqn:symmetric-power-neg} S^n (-[X/X]) = \sum_{\mu \in \calQ, |\mu|=n} (-1)^{||\mu||} [\Sym^\mu X / \Sym^{n} X]. \end{equation}
This follows from writing 
\[ \pi=S([X]) -1 = (0, [X/X], [\Sym^2 X / X], \ldots) \]
so that
\[ S([X])^{-1} = (1 + \pi)^{-1} = 1 - \pi + \pi^2 -\pi^3 + \ldots .\]

Now, applying Theorem \ref{thm:non-effective-power} (which is stated with variables $t_1, t_2, \ldots$, but which we use with variables $s, z, t_1, t_2, \ldots$), we find that
\begin{multline*} (1 - s\cdot(1+ t_1 + t_2 +...) + z \cdot (t_1 + t_2 + ...))^{_\Pow [X]} = \\
\sum_{\tau \in \calQ_0} \underline{t}^\tau \sum_{\substack{\tau = \tau_s + \tau_z \\ \tau_s, \tau_z \in \calQ_0}} \sum_{n \geq 0} s^{n + |\tau_s|} z^{|\tau_z|} \left(S^n(-[X/X]) \times \prod_i S^{\tau_s(i)}(-[X/X]) \times \prod_i S^{\tau_z(i)}([X/X]) \right)^* 
\end{multline*}
Substituting in (\ref{eqn:symmetric-power-neg}), we obtain
\[ 
\sum_{\tau\in \calQ_0} \underline{t}^\tau \; \sum_{\substack{\tau = \tau_s + \tau_z \\ \tau_s, \tau_z \in \calQ_0}} \; \sum_{n\geq 0} \sum_{\substack{|\mu'|=n, |\mu_i|=\tau_s(i), \\ \mu', \mu_i \in \calQ, }}  s^{n + |\tau_s|} z^{|\tau_z|} (-1)^{||\mu'|| + \sum_i ||\mu_i||} [\Conf^{\mu' \cdot \mu_1 \cdot \mu_2 \cdot \ldots  \cdot \tau_z} X]. \] 
Organizing the sum by $\mu = \mu' + \sum \mu_i$, and noting that the constraints on $\mu_i$ imply $|\mu'|+|\tau_s|=|\mu|$, we obtain
\[ 
\sum_{\tau\in \calQ_0} \underline{t}^\tau \sum_{\substack{\tau = \tau_s + \tau_z \\ \tau_s, \tau_z \in \calQ_0}}\; \sum_{\mu \in \calQ} \; \sum_{\substack{\mu=\mu' + \sum \mu_i,\\ \mu', \mu_i \in \calQ}}  s^{|\mu|} z^{|\tau_z|} (-1)^{||\mu'|| + \sum_i ||\mu_i||} [\Conf^{\mu' \cdot \mu_1 \cdot \mu_2 \cdot \ldots \cdot \tau_z} X]. \] 
Note that the configuration space appearing in this sum does not change if we replace the $\mu_i$ and $\mu'$, which are elements of $\calQ$, with partitions in $\calQ_0$ that lie in their preimage under the contraction operator $c$ introduced in \ref{subsec:partitions-and-combinatorial-lemma}. Thus, we can apply Lemma \ref{lemma:key-combinatorics-of-piles} to replace the summation over $\mu', \mu_i \in \calQ$ with a summation over $\mu', \mu_i \in \calQ_0$ to obtain
\[ 
\sum_{\tau\in \calQ_0} \underline{t}^\tau \sum_{\substack{\tau = \tau_s + \tau_z \\ \tau_s, \tau_z \in \calQ_0}} \; \sum_{\mu \in \calQ} (-1)^{||\mu||} \sum_{\substack{\mu=\mu' + \sum \mu_i,\\ \mu', \mu_i \in \calQ_0}}  s^{|\mu|} z^{|\tau_z|}  [\Conf^{\mu' \cdot \mu_1 \cdot \ldots \cdot \mu_{||\tau_s||} \cdot \tau_z} X]. \]
Substituting $\tau_1$ for $\tau_z$ and $\tau_2$ for $\tau_s$, and combining and reordering the sums, we obtain the right-hand side of (\ref{eqn:pro-gen-func}), as desired. 
\end{proof}

\subsection{Cohomological stabilization}
In this section we prove Theorem \ref{thm:StabCohomHodge}. The proof mirrors the deduction of Theorem \ref{thm:StabCohomPointCount} from Theorem \ref{thm:StableGeomPointCount} given in Subsection \ref{subsec:PCCohomStab}. As in the point counting case, our method also gives an algorithm to compute an explicit formula for the limit (cf. Appendix \ref{appendix:AlgAndComp} for a description of this algorithm).  

\begin{proof}[Proof of Theorem \ref{thm:StabCohomHodge}]
We use the notation developed in Subsection~\ref{subsec:ProbVHS} and Section~\ref{sec:vancohom}. We denote
\begin{multline*}
[Y^\old]_\GHS := \sum_{i < n-1} (-1)^i \cdot \left( [H^i(Y(\CC), \QQ)] +  [H^{2n-i}(Y(\CC), \QQ)(1)] \right) \\ 
+ (-1)^{n-1} [H^i(Y(\CC), \QQ)] \in K_0(\GHS).
\end{multline*}

We can view $[Y^\old]_\GHS$ as a constant virtual variation of Hodge structure on $U_d$. By Equation (\ref{eqn:VanCohomDSum-Hodge}) and the weak Lefschetz theorem, 
\[ [Y_\old]_\GHS + [\VanQ] = [Z_d / U_d]_{\GVHS}, \] 
and thus, using the additivity of $(p_k', \;)$, 
\begin{eqnarray*}
(p_k' - (p_k', [Y_\old]), [Z_d / U_d]_\GVHS ) & = & (p_k', [Z_d / U_d]_\GVHS) - (p_k', [Y_\old]) \\
& = & (p_k', [\VanQ] ).
\end{eqnarray*}
Denote by $s_{\tau, Y}$ the polynomial in $\Lambda_{K_0(\GHS)}$ obtained by expressing $s_\tau \in \Lambda$ as a polynomial in $p_k'$, then substituting $p_k' - (p_k', [Y_\old])$ for $p_k'$. By the above, we have
\[ (s_{\tau, Y}, [Z_d / U_d]_\GVHS) = (s_{\tau}, [\VanQ]) = [\VanQ^{\pi_{\tau,d}}]. \]
Thus,
\begin{eqnarray*}
\lim_{d\rightarrow \infty} \frac{\chi_\HS (H_c^\bullet(U_d(\CC), \VanQ^{\pi_{\sigma,d}}))}{[\QQ(-\dim U_d)]} &  = & \lim_{d\rightarrow \infty} \frac{\chi_\HS ([(s_{\tau, Y}, [Z_d / U_d]_\GVHS)]}{[\QQ(-\dim U_d)]}
\end{eqnarray*}

It suffices to verify that the limit on the right exists for any $f \in \Lambda$, since by linearity we can then extend the scalars on $\Lambda$ to $K_0(\GHS)$ to obtain the result for $s_{\tau, Y}$. 

For $f \in \Lambda,$ by Lemma \ref{lem:HodgeMotCommute},
\[ \lim_{d\rightarrow \infty} \frac{\chi_\HS ([(f, [Z_d / U_d]_\GVHS)]}{[\QQ(-\dim U_d)]} =
\lim_{d\rightarrow \infty} \frac{((f, [Z_d / U_d])_{\motcomp} )_{\HS} }{\LL^{\dim U_d}_\HS} \]

Applying the Hodge realization to Theorem \ref{thm:StabGeomMot}, we see the limit on the right exists (and has a universal formula of the form claimed).   

\end{proof}

\appendix
\section{Algorithms and explicit computations.}
\label{appendix:AlgAndComp}
In this appendix we give an algorithm for computing the explicit universal formulas in Theorem \ref{thm:StabCohomPointCount}, extracted from the proof in  Subsection \ref{subsec:PCCohomStab}. With minor modifications, this also computes the analogous limits in Theorem \ref{thm:StabGeomMot}.

\hfill\\
\noindent\textbf{Algorithm.}

\hfill\\
\noindent\textbf{Input:} A partition $\sigma$ and dimension $n$.

\hfill\\
\noindent\textbf{Output:} A universal formula computing
\[ \lim_{d \rightarrow \infty} q^{-\dim U_d}\sum_i (-1)^i \Tr \Frob_q \actsonr H_c^\bullet (U_{d,\overline{\FF_q}}, \VanQl^{\pi_{\sigma, d}}) \]
for polarized smooth projective varieties $Y/\FF_q$ of dimension $n$ in terms of symmetric functions of the eigenvalues of Frobenius acting on the cohomology of $Y$.

\hfill\\
\noindent\textbf{Step 1:} As $n-1=0$, is even positive, or odd, compute the appropriate Schur polynomial $s_\sigma$ of Proposition \ref{prop:SurjLambdaSchur} as a binomial polynomial in the $p_k'$. 

\hfill\\
\noindent\textbf{Step 2:} For each ``binomial monomial" 
\[ \binom{p}{\underline{l}} = \prod \binom{p_k'}{l_k} \]
appearing in this expression, 
substitute the coefficient of $\prod t_k^{l_k} $ appearing in 
\[ \prod_k \left( \left(1+ \frac{q^{nk} - 1}{q^{(n+1)k}-1} t_k \right)^{(p_k', [Y])} \cdot (1+t_k)^{-(p_k', [Y^\old])} \right)\]

\hfill\\
\noindent \textbf{Step 3:} Multiply the resulting expression by $\zeta_Y(n+1)^{-1}$. (Here we use that this is the alternating product of the characteristic series of Frobenius acting on the cohomology of $Y$, evaluated at $q^{-(n+1)}$). 

\begin{remark} Usually it is more convenient to omit Step 3, which corresponds to normalizing by $\#U_d(\FF_q)$ instead of $q^{-\dim U_d}$.  
\end{remark}

\begin{example}\label{example:AppdxPrimCohomVanish} For the standard representation, corresponding in all cases to the partition $(1)$, we obtain
\begin{eqnarray*} \EE_{\mu_\infty}[s_{\tau,Y}]& = &\EE_{\mu_\infty}[p_1' - (p_1', [Y^\old])] \\
& = & \frac{q^{n} - 1}{q^{n+1}-1}\# Y(\FF_q) - \Tr \Frob_q \actsonr [Y^\old]. \\
\end{eqnarray*}
For example, if $Y=\PP^{n}$ this gives,
\begin{eqnarray*} \EE_{\mu_\infty}[s_{\tau,\PP^{n}}] & = &\frac{q^{n} - 1}{q^{n+1}-1}\# \PP^{n}(\FF_q) - \#\PP^{n-1}(\FF_q) \\
& = & 0. 
\end{eqnarray*}
By Theorem \ref{thm:StabGeomMot}, we obtain a similar result for Hodge structures (cf. Example~\ref{example:IntroPrimCohomVanish}). 

We note that for the standard representation, $H^1$ is known to stabilize by Nori's connectivity theorem \cite[Corollary 4.4]{Nori-Connectivity}, and our results are compatible with the stable values that appear. 
\end{example}

\bibliography{references}
\bibliographystyle{plain}
\end{document}